\theoremstyle{plain}
\newtheorem{thm}{Theorem}
\newtheorem{lem}[thm]{Lemma}
\newtheorem{prop}[thm]{Proposition}
\newtheorem{cor}[thm]{Corollary}
\theoremstyle{definition}
\newtheorem*{example*}{Example}
\newtheorem*{rem*}{Remark}
\newcommand{\FF}{\phantom{ }_{2}F_{1}} 
\newcommand{\BB}{\mathrm{B}} 
\newcommand{\al}{\alpha}
\newcommand{\be}{\beta}
\newcommand{\R}{\mathbb{R}}
\newcommand{\C}{\mathbb{C}}
\DeclareMathOperator{\supp}{supp}
\DeclareMathOperator{\dist}{dist}
\DeclareMathOperator{\Ree}{Re}
\DeclareMathOperator{\fgamm}{{\boldsymbol\gamma}}
\DeclareSymbolFont{bbsymbol}{U}{bbold}{m}{n}
\DeclareMathSymbol{\ind}{\mathbin}{bbsymbol}{'061}
\title{Sharp weighted fractional Hardy inequalities}
\author[B{.} Dyda]{Bart{\l}omiej Dyda}
\author[M{.} Kijaczko]{Micha\l{} Kijaczko}
\keywords{fractional Hardy inequality, weight, best constant, half-space, convex domain}
\subjclass[2020]{46E35, 39B72, 26D15}
\address[B.D. and M.K.]{Faculty of Pure and Applied Mathematics\\ Wroc{\l}aw University 
	of Science and Technology\\
	Wybrze\.ze Wyspia\'nskiego 27,
	50-370 Wroc{\l}aw, Poland
}
\email{bdyda@pwr.edu.pl\qquad dyda@math.uni-bielefeld.de}
\email{michal.kijaczko@pwr.edu.pl}
\begin{document}

\begin{abstract}
We investigate the weighted fractional order Hardy inequality 
$$
\int_{\Omega}\int_{\Omega}\frac{|f(x)-f(y)|^{p}}{|x-y|^{d+sp}}\dist(x,\partial\Omega)^{-\alpha}\dist(y,\partial\Omega)^{-\beta}\,dy\,dx\geq C\int_{\Omega}\frac{|f(x)|^{p}}{\dist(x,\partial\Omega)^{sp+\alpha+\beta}}\,dx,    
$$
for  $\Omega=\R^{d-1}\times(0,\infty)$, $\Omega$ being a convex domain or $\Omega=\R^d\setminus\{0\}$. Our work focuses on finding the best (i.e. sharp) constant $C=C(d,s,p,\alpha,\beta)$ in all cases. We also obtain weighted version of the fractional Hardy--Sobolev--Maz'ya inequality.
The proofs are based on general Hardy inequalities and the non-linear ground state representation, established by Frank and Seiringer.
\end{abstract}

\maketitle

\section{Introduction and main results}

In this paper we consider fractional order weighted Hardy inequalities,
\begin{equation}\label{generalweightedfractionalHardy}
\int_{\Omega}\int_{\Omega}\frac{|u(x)-u(y)|^{p}}{|x-y|^{d+sp}}\dist(x,\partial\Omega)^{-\alpha}\dist(y,\partial\Omega)^{-\beta}\,dy\,dx\geq C\int_{\Omega}\frac{|f(x)|^{p}}{\dist(x,\partial\Omega)^{sp+\alpha+\beta}}\,dx, 
\end{equation}
where $u\in C_{c}(\Omega)$. Here $\Omega$ is a nonempty, proper, open subset of $\R^d$ and $\dist(x,\partial\Omega)=\inf_{y\in\partial\Omega}|x-y|$ denotes the distance to the boundary.
We are interested in obtaining the sharp constant $C$ in the inequality \eqref{generalweightedfractionalHardy}, under some assumptions on $\Omega$ and the other parameters. 

Fractional order Hardy inequalities are of great interest in last decades, due to their connection with probability theory (especially Lévy processes) or partial differential equations. For (non-weighted) fractional Hardy inequalities with best constants, we refer to \cite{bbz, MR2469027, FLS, MR2663757, MR2723817, LossSloane}.
We also refer to  \cite{MR3420496} and \cite{MK} for a parallel topic of weighted fractional Sobolev spaces related to the left-hand side of \eqref{generalweightedfractionalHardy}.

\subsection{Weighted fractional Hardy inequalities for the half-space}
Our main goal is to obtain the results for the fractional weighted Hardy inequality in the half-space $\R^d_+:=\{(x_1,x_2,\dots,x_d)\in\R^d:x_d>0\}$. The unweighted case was studied by Bogdan and Dyda \cite{MR2663757} for $p=2$ and generalized by Frank and Seiringer \cite{MR2723817} to arbitrary $p\geq 1$. According to our best knowledge, the weighted case in new.

\begin{thm}[\textbf{Sharp weighted fractional Hardy inequality on $\R^d_+$}]\label{tw3}
Let $0<s<1$, $p\geq 1$, $\alpha,\beta,\alpha+\beta\in (-1,sp)$ and $1+\alpha+\beta\neq sp$. Then for all $u\in C_{c}(\R^{d}_{+})$ the following inequality holds, 
\begin{equation}\label{hardy2}
\int_{\R^{d}_{+}}\int_{\R^{d}_{+}}\frac{|u(x)-u(y)|^{p}}{|x-y|^{d+sp}}\,x_{d}^{\alpha}\,y_{d}^{\beta}\,dy\,dx\geq\mathcal{D}\int_{\R^{d}_{+}}\frac{|u(x)|^{p}}{x_{d}^{sp-\alpha-\beta}}\,dx, 
\end{equation}
where $\mathcal{D}>0$
is the optimal constant given by \eqref{D}.
\end{thm}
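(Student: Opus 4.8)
The plan follows the standard strategy of Frank--Seiringer for sharp fractional Hardy inequalities, adapted to the weighted setting. The key idea is the \emph{non-linear ground state representation}: one guesses the optimal ``virtual ground state'' $w$, which should be a power of the distance to the boundary, and then reduces the problem to verifying a one-dimensional inequality for the optimal constant.

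First I would identify the ground state. The homogeneity of the problem suggests seeking $u(x)=x_d^{-\gamma}$ (not compactly supported, but serving as the extremizer) for an exponent $\gamma$ to be determined. The plan is to compute the one-variable quantity that arises after integrating out the transverse $\R^{d-1}$ directions. Because the weights $x_d^\alpha y_d^\beta$ and the kernel $|x-y|^{-d-sp}$ are homogeneous, the $(d-1)$-dimensional integral over the horizontal variables can be carried out explicitly, leaving a one-dimensional kernel on $(0,\infty)$ with an effective singularity exponent. This reduces \eqref{hardy2} to a one-dimensional weighted Hardy inequality on the half-line, for which the optimal constant is expressed through a Beta-type integral; I expect $\mathcal{D}$ in \eqref{D} to be precisely the value of this integral evaluated at the optimal $\gamma$.

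Next I would invoke the general ground state representation machinery. Writing $u(x)=w(x)v(x)$ with $w(x)=x_d^{-\gamma}$ and expanding the left-hand side of \eqref{hardy2}, the Frank--Seiringer identity rewrites the quadratic-type form as the Hardy constant times $\int |u|^p x_d^{-(sp-\alpha-\beta)}$ plus a manifestly nonnegative remainder term involving differences of $v$ weighted by $w(x)w(y)$. The crucial algebraic step is choosing $\gamma$ so that $w$ satisfies the right Euler--Lagrange-type balance: concretely, $\gamma$ must be the root of the function coming from the one-dimensional computation that equates the two sides, i.e.\ $\gamma$ is selected so that $x_d^{-\gamma}$ is a (weak, weighted) eigenfunction of the associated nonlocal operator with eigenvalue $\mathcal{D}$. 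The conditions $\alpha,\beta,\alpha+\beta\in(-1,sp)$ guarantee the relevant integrals converge, and the hypothesis $1+\alpha+\beta\neq sp$ excludes the degenerate resonant exponent where the power-law ground state fails.

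The main obstacle will be the explicit evaluation and sharpness of the one-dimensional constant. Establishing that $\mathcal{D}$ is \emph{optimal} (not merely admissible) requires producing a sequence $u_n\in C_c(\R^d_+)$ that saturates the inequality; the natural candidates are smooth truncations and regularizations of the ground state $x_d^{-\gamma}$, and one must show the remainder term in the ground state representation tends to zero along this sequence while both the left- and right-hand sides converge to finite multiples of $\mathcal{D}$. Controlling the boundary and tail contributions of these truncations—showing the cutoff does not produce spurious positive contributions that keep the ratio bounded away from $\mathcal{D}$—is the delicate analytic point. The asymmetry of the weights ($\alpha\neq\beta$ in general) means the one-dimensional kernel is not symmetric, so the Beta-integral computation must be handled with the appropriate hypergeometric identities rather than a symmetric reduction; this is where the bulk of the technical calculation will reside.
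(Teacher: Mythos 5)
Your plan is essentially the route the paper takes: the ground state is $w(x)=x_d^{-\gamma}$ with $\gamma=\frac{1+\alpha+\beta-sp}{p}$, the transverse variables are integrated out via the explicit formula $\int_{\R^{d-1}}(|x'-y'|^2+|x_d-y_d|^2)^{-\frac{d+sp}{2}}\,dy'=\frac{\pi^{\frac{d-1}{2}}\Gamma(\frac{1+sp}{2})}{\Gamma(\frac{d+sp}{2})}|x_d-y_d|^{-1-sp}$, the resulting one-dimensional Beta-type integral gives $\mathcal{D}$, and the inequality follows from the Frank--Seiringer general Hardy inequality applied to $u=wv$. Two caveats, though. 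First, the asymmetry $\alpha\neq\beta$ causes no hypergeometric complications: the paper simply symmetrizes the kernel, taking $k(x,y)=\frac12|x-y|^{-d-sp}(x_d^{\alpha}y_d^{\beta}+x_d^{\beta}y_d^{\alpha})$, which leaves the double integral unchanged and makes the general machinery directly applicable; your worry about a non-symmetric reduction is misplaced. Second, your sharpness argument as stated (``show the remainder term in the ground state representation tends to zero'') only makes sense for $p\geq 2$, where the representation carries a remainder; for $1\leq p<2$ the general result is a one-sided inequality with no remainder identity, so optimality must instead be shown by directly computing the Rayleigh quotient on trial functions. The paper does this by taking $u_n(x)=\chi_n(x')\varphi(x_d)$ with an expanding transverse cutoff $\chi_n$, which reduces the $d$-dimensional quotient to the one-dimensional one (up to the explicit factor $\mathcal{D}(d,s,p,\al,\be)/\mathcal{D}(1,s,p,\al,\be)$), and then invokes the known one-dimensional optimality via truncations of $x_d^{-\gamma}$; you would need to make this two-step reduction explicit rather than relying on the remainder.
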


If we additionally assume that $p\geq 2$, then we obtain the following stronger inequality with a~remainder.

\begin{thm}[\textbf{Sharp weighted fractional Hardy inequality on $\R^d_+$ with remainder}]\label{tw4}
Let $0<s<1$, $p\geq 2$, $\alpha,\beta,\alpha+\beta\in (-1,sp)$ and $1+\alpha+\beta\neq sp$. Then for all $u\in C_{c}(\R^{d}_{+})$ the following inequality holds, 
\begin{align*}
\int_{\R^{d}_{+}}\int_{\R^{d}_{+}}\frac{|u(x)-u(y)|^{p}}{|x-y|^{d+sp}}\,x_{d}^{\alpha}\,y_{d}^{\beta}\,dy\,dx-&\mathcal{D}\int_{\R^{d}_{+}}\frac{|u(x)|^{p}}{x_{d}^{sp-\alpha-\beta}}\,dx\\
&\geq c_{p}\int_{\R^{d}_{+}}\int_{\R^{d}_{+}}\frac{|v(x)-v(y)|^{p}}{|x-y|^{d+sp}}x_{d}^{-\frac{1-\alpha+\beta-sp}{2}}y_{d}^{-\frac{1+\alpha-\beta-sp}{2}}\,dy\,dx,
\end{align*}
where $v(x)=x_{d}^{\frac{1+\alpha+\beta-sp}{p}}u(x)$, $c_{p}$ is given by
\begin{equation}\label{cp}
c_{p}=\min_{0<\tau<\frac{1}{2}}\left((1-\tau)^{p}-\tau^{p}+p\tau^{p-1}\right)   
\end{equation}
and $\mathcal{D}>0$ is a constant from \eqref{D}. For $p=2$ this is an equality with $c_{2}=1$.
\end{thm}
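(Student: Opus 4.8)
The plan is to prove Theorem~\ref{tw4} by the non-linear ground state substitution method of Frank and Seiringer, which is exactly the tool that would have yielded the sharp constant $\mathcal{D}$ in Theorem~\ref{tw3}. Writing $v(x)=x_d^{(1+\alpha+\beta-sp)/p}u(x)$, so that $u(x)=x_d^{-(1+\alpha+\beta-sp)/p}v(x)$, the first step is to substitute this into the double integral on the left-hand side of \eqref{hardy2} and expand $|u(x)-u(y)|^p$. The key algebraic identity to isolate is the pointwise inequality underlying the ground state representation: for the model weight $\omega(x)=x_d^{(1+\alpha+\beta-sp)/p}$ one has, with $a=u(x)$, $b=u(y)$ and the weight values $\omega_x,\omega_y$, an elementary inequality of the form
\begin{equation*}
|\omega_x a-\omega_y b|^p\geq |\omega_x|^{p-1}|\omega_y|\,|a|^p+|\omega_y|^{p-1}|\omega_x|\,|b|^p-p\,\omega_x^{p-1}\omega_y\,|a|^{p-2}a\,(a-b)+c_p\,\omega_x\omega_y\,|a-b|^p,
\end{equation*}
valid for $p\geq 2$ with $c_p$ as in \eqref{cp}. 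This is the single inequality doing all the work; the earlier Theorem~\ref{tw3} corresponds to dropping the nonnegative $c_p$-remainder and keeping only the first three terms.

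\medskip

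Second, I would integrate this pointwise inequality against the kernel $|x-y|^{-d-sp}x_d^\alpha y_d^\beta$ over $\R^d_+\times\R^d_+$. The first two terms on the right, after symmetrizing in $x$ and $y$ and performing the inner integration in the transversal direction, reproduce precisely the Hardy term $\mathcal{D}\int |u|^p x_d^{-(sp-\alpha-\beta)}\,dx$; this is where the explicit value of $\mathcal{D}$ from \eqref{D} is identified, via the one-dimensional integral computing the ground-state eigenvalue. The third (cross) term integrates to zero by antisymmetry: after symmetrization it becomes $-\tfrac{p}{2}\int\int k(x,y)\,(|a|^{p-2}a-|b|^{p-2}b)(a-b)$-type expression that is accounted for within the ground-state normalization, or more directly it contributes a term that vanishes because $\omega$ is the ground state of the associated operator and $u\in C_c$ has compact support. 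The remaining $c_p$-term, after substituting back $\omega_x\omega_y|a-b|^p=|v(x)-v(y)|^p$ and reading off the residual weight $x_d^{-(1-\alpha+\beta-sp)/2}y_d^{-(1+\alpha-\beta-sp)/2}$ from $\omega_x\omega_y\cdot\omega_x^{-1}\omega_y^{-1}$ combined with $x_d^\alpha y_d^\beta$, produces exactly the claimed remainder integral. The case $p=2$ is checked separately: there the pointwise inequality is an identity with $c_2=1$, giving equality throughout.

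\medskip

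The main obstacle I expect is establishing the sharp pointwise inequality with the correct constant $c_p=\min_{0<\tau<1/2}\bigl((1-\tau)^p-\tau^p+p\tau^{p-1}\bigr)$ and verifying that this constant is not improvable. Reducing to one variable via homogeneity, one sets $\tau=\omega_y/(\omega_x+\omega_y)$ or a similar ratio and must show the resulting scalar inequality holds with the stated minimum; the delicate point is that the constant is obtained as a minimum over $\tau\in(0,1/2)$ rather than in closed form, so convexity of $t\mapsto|t|^p$ together with a careful case analysis (sign of $a$ versus $b$, and the ordering of $|a|,|b|$) is needed, and one must confirm the extremal configuration lands in the stated range. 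A secondary technical point is justifying that the cross term genuinely drops out, which requires the integrability and symmetry bookkeeping to be done carefully under the hypothesis $1+\alpha+\beta\neq sp$ guaranteeing $v$ is well-defined and the exponents avoid the critical threshold; the constraints $\alpha,\beta,\alpha+\beta\in(-1,sp)$ ensure all the transversal one-dimensional integrals converge, and these convergence checks, while routine, must be tracked so that the application of the ground state representation of Frank and Seiringer is legitimate.
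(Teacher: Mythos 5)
Your overall strategy---substituting $u=wv$ with the power-function ground state and reducing to a pointwise convexity inequality that is then integrated against the kernel---is exactly the route the paper takes: its proof consists of invoking the general remainder inequality \eqref{generalhardyremainder} of Frank and Seiringer together with Lemma~\ref{l.halfspace}, which evaluates the resulting potential for $w(x)=x_d^{-\gamma}$ and identifies the constant \eqref{D}. However, the pointwise inequality you single out as ``the single inequality doing all the work'' is false, and this is a genuine gap, not a typo. Test it with $p=2$ and $\omega_x=\omega_y=1$, where you claim it is an identity with $c_2=1$: the right-hand side becomes $|a|^2+|b|^2-2a(a-b)+(a-b)^2=2b^2$, so the claim reads $(a-b)^2\ge 2b^2$, which fails already for $a=b\neq 0$. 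The correct inequality (Frank--Seiringer's convexity lemma, which underlies \eqref{generalhardyremainder}) has a different structure: with $u=wv$ and $w_x=w(x)$, $w_y=w(y)$,
\[
|u(x)-u(y)|^p\ \ge\ \bigl(w_x-w_y\bigr)\,\bigl|w_x-w_y\bigr|^{p-2}\,\bigl(w_x|v(x)|^p-w_y|v(y)|^p\bigr)\;+\;c_p\,\bigl(w_xw_y\bigr)^{\frac{p}{2}}\,|v(x)-v(y)|^p .
\]

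The structural differences matter. First, the Hardy part must carry the $p$-Laplacian factor $(w_x-w_y)|w_x-w_y|^{p-2}$: only this antisymmetric, vanishing-on-the-diagonal expression gives a convergent principal-value integral against $|x-y|^{-d-sp}$, whose symmetrization is exactly $\int|u|^pV$ with $V$ computed in Lemma~\ref{l.halfspace}. Your first two terms and your cross term are each separately non-integrable near $y=x$ (nothing cancels the $|x-y|^{-d-sp}$ singularity), so the term-by-term integration you describe, and in particular the assertion that the cross term ``integrates to zero by antisymmetry,'' cannot be carried out; for $p>2$ there is no exact expansion with a vanishing cross term at all---the argument is a one-sided inequality. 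Second, the remainder weight must be $(w_xw_y)^{p/2}$, not $\omega_x\omega_y$; these agree only for $p=2$, and it is precisely the exponent $p/2$ that produces the weights $x_d^{-(1-\alpha+\beta-sp)/2}y_d^{-(1+\alpha-\beta-sp)/2}$ in the statement. Finally, even with the correct pointwise inequality, the proof still requires the explicit principal-value computation reducing the transversal integral to the one-dimensional integral defining $\mathcal{D}$ in \eqref{D} under the hypotheses $\alpha,\beta,\alpha+\beta\in(-1,sp)$; this is the content of Lemma~\ref{l.halfspace}, which your sketch gestures at but does not supply.
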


Theorem~\ref{tw4}  also gives us the information that the inequality \eqref{hardy2} is strict, unless $u$ is identically zero. 
As an application of Theorem~\ref{tw4}, we obtain the following version of the weighted fractional Hardy--Sobolev--Maz'ya inequality for the half-space.
\begin{thm}[\textbf{Fractional weighted Hardy--Sobolev--Maz'ya inequality on $\R^d_+$}]\label{tw5}
Let $0<s<1$, $p\geq 2$, $sp<d$, $\alpha,\beta,\alpha+\beta\in (-1,sp)$ and $1+\alpha+\beta\neq sp$. Then for all $u\in C_{c}(\R^{d}_{+})$ the following inequality holds, 
\begin{align*}
 \int_{\R^{d}_{+}}\int_{\R^{d}_{+}}\frac{|u(x)-u(y)|^{p}}{|x-y|^{d+sp}}\,x_{d}^{\alpha}\,y_{d}^{\beta}\,dy\,dx-&\mathcal{D}\int_{\R^{d}_{+}}\frac{|u(x)|^{p}}{x_{d}^{sp-\alpha-\beta}}\,dx\\
 &\geq C\left(\int_{\R^d_+}|u(x)|^{q}x_{d}^{\frac{q}{p}(\al+\be)}\,dx\right)^{\frac{p}{q}},
\end{align*}
where $C=C(\al,\be,d,s,p)>0$ is a constant and $q=\frac{dp}{d-sp}$.
\end{thm}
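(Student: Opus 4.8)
The plan is to combine the sharp ground state representation of Theorem~\ref{tw4} with a weighted fractional Sobolev inequality for the transformed function
\begin{equation*}
v(x)=x_d^{\frac{1+\alpha+\beta-sp}{p}}u(x).
\end{equation*}
Since Theorem~\ref{tw5} asserts only the existence of a positive constant $C$, and not its optimal value, the Sobolev step may be carried out with any convenient, not necessarily sharp, constant; this considerably widens the admissible tools.

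First I would apply Theorem~\ref{tw4}, which for $p\geq 2$ bounds the left-hand side of the asserted inequality from below by $c_p\,\mathcal{Q}[v]$, where $c_p>0$ is the constant in \eqref{cp} and
\begin{equation*}
\mathcal{Q}[v]=\int_{\R^{d}_{+}}\int_{\R^{d}_{+}}\frac{|v(x)-v(y)|^{p}}{|x-y|^{d+sp}}\,x_{d}^{-\frac{1-\alpha+\beta-sp}{2}}\,y_{d}^{-\frac{1+\alpha-\beta-sp}{2}}\,dy\,dx .
\end{equation*}
It therefore suffices to bound $\mathcal{Q}[v]$ below by a multiple of $\bigl(\int_{\R^d_+}|u|^{q}x_d^{\frac{q}{p}(\alpha+\beta)}\,dx\bigr)^{p/q}$. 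Inserting $u=x_d^{-(1+\alpha+\beta-sp)/p}v$ and simplifying the exponent, the right-hand side equals
\begin{equation*}
\left(\int_{\R^d_+}|v(x)|^{q}\,x_d^{\frac{q}{p}(sp-1)}\,dx\right)^{p/q},\qquad \frac{q}{p}(sp-1)=\frac{d(sp-1)}{d-sp}.
\end{equation*}

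Next I would symmetrize the kernel. Since $|v(x)-v(y)|^p|x-y|^{-d-sp}$ is symmetric, $\mathcal{Q}[v]$ is unchanged if its weight is replaced by the half-sum $\tfrac12(x_d^{a}y_d^{b}+x_d^{b}y_d^{a})$, where $a=-\tfrac{1-\alpha+\beta-sp}{2}$ and $b=-\tfrac{1+\alpha-\beta-sp}{2}$ satisfy $a+b=sp-1$. The arithmetic--geometric mean inequality then gives $\tfrac12(x_d^{a}y_d^{b}+x_d^{b}y_d^{a})\geq (x_dy_d)^{(a+b)/2}=(x_dy_d)^{(sp-1)/2}$, so
\begin{equation*}
\mathcal{Q}[v]\geq\int_{\R^{d}_{+}}\int_{\R^{d}_{+}}\frac{|v(x)-v(y)|^{p}}{|x-y|^{d+sp}}\,(x_dy_d)^{\frac{sp-1}{2}}\,dy\,dx .
\end{equation*}
Consequently the whole statement reduces to the weighted fractional Sobolev inequality
\begin{equation*}
\int_{\R^{d}_{+}}\int_{\R^{d}_{+}}\frac{|v(x)-v(y)|^{p}}{|x-y|^{d+sp}}\,(x_dy_d)^{\frac{sp-1}{2}}\,dy\,dx\geq C\left(\int_{\R^d_+}|v(x)|^{q}\,x_d^{\frac{d(sp-1)}{d-sp}}\,dx\right)^{p/q},
\end{equation*}
to hold for every $v\in C_c(\R^d_+)$ with $q=\frac{dp}{d-sp}$; the admissible constant in Theorem~\ref{tw5} is then obtained by multiplying the constant here by $c_p$.

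The main obstacle is precisely this last inequality, which carries \emph{different} powers of $x_d$ in the seminorm and in the target norm and is thus of weighted Hardy--Sobolev (Caffarelli--Kohn--Nirenberg) type rather than a plain embedding; note that already the unweighted case $\alpha=\beta=0$ of Theorem~\ref{tw5} rests on it. As a first check I would confirm that both sides are homogeneous of the same degree under the dilations $v\mapsto v(\lambda\,\cdot)$, $\lambda>0$, which forces the exponent $q=\frac{dp}{d-sp}$ and shows that the two weights are correctly balanced. To prove the inequality I would invoke, or adapt to the present weights, the weighted fractional Sobolev embeddings of \cite{MR3420496, MK}; the admissible weight exponent $(sp-1)/2>-\tfrac12$ guarantees that the relevant weighted space is well defined. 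Alternatively, since only a finite constant is needed, a self-contained non-sharp proof should be reachable by extending $v$ by zero and combining a Hedberg-type pointwise bound for the associated Riesz potential with the boundedness of a weighted maximal operator, at the cost of a worse constant. Either route yields a positive $C=C(\alpha,\beta,d,s,p)$ and completes the argument.
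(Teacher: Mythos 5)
Your proposal follows essentially the same route as the paper: apply Theorem~\ref{tw4}, factor the remainder's weight as $(x_dy_d)^{\frac{sp-1}{2}}(x_d/y_d)^{\frac{\al-\be}{2}}$, discard the asymmetric factor (your AM--GM symmetrization is a clean variant of the paper's restriction to $\{x_d>y_d\}$ and even avoids the factor $\tfrac12$ the paper loses), and reduce to the weighted Sobolev inequality with symmetric weight $(x_dy_d)^{\frac{sp-1}{2}}$ and target weight $x_d^{\frac{d(sp-1)}{d-sp}}$. For that last inequality the right reference is Dyda--Frank \cite{MR2910984}, Section~2, where it is precisely the core estimate behind the unweighted Hardy--Sobolev--Maz'ya theorem --- as you yourself observe --- so you should quote it directly rather than appeal to the embeddings of \cite{MR3420496,MK} (which do not contain it) or to a Hedberg-type argument that would have to be built from scratch.
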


\subsection{Weighted fractional Hardy inequalities on convex domains}
We generalize the result obtained by Loss and Sloane \cite{LossSloane} to the weighted setting, that is we prove that the inequality \eqref{generalweightedfractionalHardy} holds for all convex domains $\Omega\subset\R^d$ with the same optimal constant~$\mathcal{D}$ as for the half-space. More precisely, we have the following result.
\begin{thm}[\textbf{Weighted fractional Hardy inequality for general domains}]\label{tw7}
Let $0<s<1$, $p\geq 1$, $\al,\be,\al+\be\in(-1,0]$ and $sp>1+\al+\be$. Let $\Omega\subset\R^d$ be a nonempty, proper, open set and denote $d_\Omega(x)=\dist(x,\partial \Omega)$. Then, for all $u\in C_c(\Omega)$,
\begin{equation}\label{hardyconvex1}
\int_\Omega\int_\Omega\frac{|u(x)-u(y)|^{p}}{|x-y|^{d+sp}}d_\Omega(x)^{\al}d_{\Omega}(y)^{\be}\,dy\,dx\geq\mathcal{D}\int_{\Omega}\frac{|u(x)|^{p}}{m_{sp-\al-\be}(x)^{sp-\al-\be}}\,dx,    
\end{equation}
where $\mathcal{D}=\mathcal{D}(d,s,p,\al,\be)$ is given by formula \eqref{D},
\begin{equation}\label{m}
m_{a}(x)^{a}=\frac{\int_{\mathbb{S}^{d-1}}|\omega_{d}|^{a}\,d\omega}{\int_{\mathbb{S}^{d-1}}d_{\omega,\Omega}(x)^{-a}\,d\omega}, \quad d_{\omega,\Omega}(x)=\min\{|t|:x+t\omega\notin \Omega\}.
\vspace{0.1cm}
\end{equation}
In particular, if $\Omega$ is convex, then
\begin{equation}\label{hardyconvex2}
\int_\Omega\int_\Omega\frac{|u(x)-u(y)|^{p}}{|x-y|^{d+sp}}d_\Omega(x)^{\al}d_{\Omega}(y)^{\be}\,dy\,dx\geq\mathcal{D}\int_{\Omega}\frac{|u(x)|^{p}}{d_\Omega(x)^{sp-\al-\be}}\,dx.    
\end{equation}
The constant in \eqref{hardyconvex2} is sharp.
\end{thm}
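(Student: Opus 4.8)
The plan is to prove the general lower bound \eqref{hardyconvex1} for an arbitrary proper open $\Omega$ by the \emph{method of lines}, reducing the $d$-dimensional double integral to one-dimensional weighted fractional Hardy inequalities along each line, and then to deduce \eqref{hardyconvex2} for convex $\Omega$ from a purely geometric comparison of the two weights. The input I would record first is the one-dimensional weighted inequality: for every open interval $I\subset\R$, writing $\delta_I(t)=\dist(t,\partial I)$, and every $g\in C_c(I)$,
\[
\int_I\int_I \frac{|g(t)-g(\tau)|^{p}}{|t-\tau|^{1+sp}}\,\delta_I(t)^{\al}\delta_I(\tau)^{\be}\,dt\,d\tau \;\ge\; \mathcal{D}_1\int_I \frac{|g(t)|^{p}}{\delta_I(t)^{sp-\al-\be}}\,dt,
\]
where $\mathcal{D}_1$ is the sharp constant for the half-line $(0,\infty)$; the constant in \eqref{D} should then be $\mathcal{D}=\tfrac12\,\mathcal{D}_1\int_{\mathbb{S}^{d-1}}|\omega_d|^{\,sp-\al-\be}\,d\omega$. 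This estimate is proved by the same mechanism as the half-space case underlying Theorem~\ref{tw3}, namely the non-linear ground state representation of Frank and Seiringer with ground state $w(t)=\delta_I(t)^{(sp-1-\al-\be)/p}$ (a legitimate choice since $sp\neq 1+\al+\be$, and with positive exponent because $sp>1+\al+\be$).

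Granting the one-dimensional inequality, I would carry out the reduction via the coordinate change $(x,y)\mapsto(\omega,z,t,\tau)$, where $\omega\in\mathbb{S}^{d-1}$, $z\in\omega^{\perp}$, and $x=z+t\omega$, $y=z+\tau\omega$; its Jacobian is $|t-\tau|^{d-1}$, which cancels all but the one-dimensional part of the kernel, so that
\[
\int_\Omega\int_\Omega \frac{|u(x)-u(y)|^{p}}{|x-y|^{d+sp}}\,d_\Omega(x)^{\al}d_\Omega(y)^{\be}\,dy\,dx
=\frac12\int_{\mathbb{S}^{d-1}}\int_{\omega^{\perp}}\int_{I}\int_{I}\frac{|g(t)-g(\tau)|^{p}}{|t-\tau|^{1+sp}}\,d_\Omega(z+t\omega)^{\al}\,d_\Omega(z+\tau\omega)^{\be}\,dt\,d\tau\,dz\,d\omega,
\]
with $g(t)=u(z+t\omega)$ and $I=I_{z,\omega}=\{t:\,z+t\omega\in\Omega\}$. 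Along each line the exit distance equals $d_{\omega,\Omega}(z+t\omega)=\delta_I(t)$, and since moving along $\omega$ is only one way to reach $\partial\Omega$ one has $d_\Omega(z+t\omega)\le\delta_I(t)$; as $\al,\be\le0$ this yields $d_\Omega(z+t\omega)^{\al}d_\Omega(z+\tau\omega)^{\be}\ge\delta_I(t)^{\al}\delta_I(\tau)^{\be}$. This is the only place where the sign restriction $\al,\be\in(-1,0]$ enters. Applying the one-dimensional inequality on each interval $I_{z,\omega}$ and integrating over $z$ and $\omega$, the fibre integral reassembles to $\int_\Omega$, and Fubini together with the definition \eqref{m} converts the angular average $\int_{\mathbb{S}^{d-1}}d_{\omega,\Omega}(x)^{-(sp-\al-\be)}\,d\omega$ into $\bigl(\int_{\mathbb{S}^{d-1}}|\omega_d|^{\,sp-\al-\be}\,d\omega\bigr)\,m_{sp-\al-\be}(x)^{-(sp-\al-\be)}$, which is exactly \eqref{hardyconvex1}.

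To pass to \eqref{hardyconvex2} for convex $\Omega$, I would prove the pointwise bound $m_{a}(x)\le d_\Omega(x)$ with $a=sp-\al-\be>0$. Let $x_0\in\partial\Omega$ realise $d_\Omega(x)=|x-x_0|$ and let $n$ be the corresponding outward unit normal; by convexity $\Omega$ lies in the half-space $\{y:(y-x_0)\cdot n<0\}$, whence $d_{\omega,\Omega}(x)\le d_\Omega(x)/|\omega\cdot n|$ for every $\omega$. Therefore $\int_{\mathbb{S}^{d-1}}d_{\omega,\Omega}(x)^{-a}\,d\omega\ge d_\Omega(x)^{-a}\int_{\mathbb{S}^{d-1}}|\omega\cdot n|^{a}\,d\omega$, and by rotational invariance of the spherical measure the last integral equals $\int_{\mathbb{S}^{d-1}}|\omega_d|^{a}\,d\omega$; comparing with \eqref{m} gives $m_a(x)\le d_\Omega(x)$, which turns \eqref{hardyconvex1} into \eqref{hardyconvex2}. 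Sharpness is then inherited from the half-space: $\R^{d}_{+}$ is convex and there $m_a=d_\Omega$, so \eqref{hardyconvex2} reduces to \eqref{hardy2}, whose constant $\mathcal{D}$ is optimal by Theorem~\ref{tw3}; since $\R^{d}_{+}$ belongs to the class of convex domains, no smaller bound can be achieved, so $\mathcal{D}$ is sharp.

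The main obstacle is the one-dimensional inequality on a \emph{bounded} interval with the half-line constant $\mathcal{D}_1$. With the ground state $w=\delta_I^{(sp-1-\al-\be)/p}$, the non-linear ground state representation rewrites the left-hand side as a manifestly nonnegative weighted $|h(t)-h(\tau)|^{p}$ energy (with $h=g/w$) plus $\int_I V(t)|g(t)|^{p}\,dt$, where $V$ is the nonlocal potential generated by $w$ and the weight $\delta_I^{\al},\delta_I^{\be}$. On the half-line one has the exact identity $V(t)=\mathcal{D}_1\,\delta_I(t)^{-(sp-\al-\be)}$, which is what defines $\mathcal{D}_1$ and the constant in \eqref{D}; on a bounded interval the same $w$ produces a potential that is no longer a pure power, and the crux is the pointwise comparison $V(t)\ge\mathcal{D}_1\,\delta_I(t)^{-(sp-\al-\be)}$. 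I expect to obtain this by splitting the defining integral at the two endpoints, comparing the interval with each of the two half-lines containing it, and using a monotonicity/convexity estimate for the resulting one-dimensional integrals (the weighted analogue of the corresponding step in Loss and Sloane \cite{LossSloane}); handling this comparison in the presence of the two distinct weights $\delta_I^{\al}$ and $\delta_I^{\be}$ is the only genuinely new analytic difficulty beyond Theorem~\ref{tw3}.
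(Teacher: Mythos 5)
Your architecture coincides with the paper's: a one-dimensional weighted Hardy inequality on each line, the Loss--Sloane directional decomposition, the comparison $d_\Omega\le d_{\omega,\Omega}$ combined with $\al,\be\le 0$, the geometric fact $m_a\le d_\Omega$ for convex $\Omega$ (your supporting-hyperplane argument for this is correct and is essentially what the paper delegates to \cite{LossSloane}), and sharpness inherited from the half-space. The genuine gap is exactly the step you yourself flag as the ``only genuinely new analytic difficulty'': the one-dimensional inequality on a \emph{bounded} interval with the half-line constant $\mathcal{D}(1,s,p,\al,\be)$. You propose to take the two-endpoint ground state $w=\delta_I^{(sp-1-\al-\be)/p}$ and prove the pointwise potential bound $V(t)\ge\mathcal{D}_1\,\delta_I(t)^{-(sp-\al-\be)}$; this is left as an expectation, and it is not the route the paper takes --- nor is it clear it can be carried out, since the potential generated by $\min\{t,1-t\}^{(sp-1-\al-\be)/p}$ against the two weights $\delta_I^{\al},\delta_I^{\be}$ is not amenable to a simple monotonicity comparison. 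The paper's Lemma~\ref{l.open} sidesteps this entirely: it keeps the \emph{one-endpoint} ground state $w(t)=t^{-\gamma}$ (which is increasing, as $\gamma<0$) and works on $(0,1)$ with weights $t^{\al}\tau^{\be}$ for functions supported in $(0,1]$; the potential comparison is then trivial, because restricting the half-line integral from $(0,\infty)$ to $(0,1)$ only \emph{removes} the part over $(1,\infty)$ where the integrand $(w(t)-w(\tau))|w(t)-w(\tau)|^{p-2}$ is nonpositive, so $V\ge\mathcal{D}_1 t^{-sp+\al+\be}$ at once. The two-sided distance is then handled combinatorially: split $(0,1)^2$ into $(0,\tfrac12)^2\cup(\tfrac12,1)^2$, discard the cross terms, and apply the one-endpoint inequality on each half-square. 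You should adopt this splitting; it dissolves your ``main obstacle''. Note also that for a general (non-convex) open $\Omega$ the line sections $I_{z,\omega}$ are countable unions of intervals, not intervals, so you additionally need the easy extension of the 1D lemma to general open $J\subset\R$ (again by discarding cross terms between components), which the paper records explicitly and your write-up omits.

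Two smaller points. First, your asserted identity $\mathcal{D}=\tfrac12\mathcal{D}_1\int_{\mathbb{S}^{d-1}}|\omega_d|^{sp-\al-\be}\,d\omega$ is inconsistent with \eqref{D}, which gives $\mathcal{D}(d,\cdot)=\tfrac12\mathcal{D}(1,\cdot)\int_{\mathbb{S}^{d-1}}|\omega_d|^{sp}\,d\omega$ (the $\Gamma$-quotient in \eqref{D} involves $sp$, not $sp-\al-\be$); the reassembly of the angular integral via \eqref{m} produces the factor $\int_{\mathbb{S}^{d-1}}|\omega_d|^{sp-\al-\be}\,d\omega$, so the bookkeeping between these two angular constants needs to be done carefully (the paper's own final display is terse on exactly this point), and you cannot simply declare them equal. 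Second, your sharpness argument only shows that $\mathcal{D}$ is optimal for the \emph{class} of convex domains (because $\R^d_+$ belongs to it); the paper claims optimality for each individual convex $\Omega$, obtained by transplanting the half-space extremizing sequence near a boundary point with a tangent hyperplane. If you intend the per-domain statement, that localization step must be supplied.
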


\subsection{Weighted fractional Hardy inequalities on $\R^d$}
Next, we focus on weighted fractional Hardy inequalities for the full space $\R^d$ and this is our  second main result.
\begin{thm}[\textbf{Sharp weighted fractional Hardy inequality on $\R^d$}]\label{tw1}
  Let $0<s<1$, $p\geq 1$ and
  $\alpha, \beta, \alpha+\beta \in (-sp,d)$.
  Then for all $u\in C_{c}(\R^d)$, when $sp+\al+\be<d$ and for all $u\in C_{c}(\R^d\setminus\{0\})$, when $sp+\al+\be>d$, the following inequality holds, 
\begin{equation}\label{hardy1}
\int_{\R^d}\int_{\R^d}\frac{|u(x)-u(y)|^{p}}{|x-y|^{d+sp}}\,|x|^{-\alpha}\,|y|^{-\beta}\,dy\,dx\geq\mathcal{C}\int_{\R^d}\frac{|u(x)|^{p}}{|x|^{sp+\alpha+\beta}}\,dx, 
\end{equation}
where $\mathcal{C}$
is the optimal constant given by \eqref{C}.
\end{thm}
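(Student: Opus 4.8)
The plan is to reproduce the argument behind Theorems~\ref{tw3} and~\ref{tw4}, with the boundary-distance weight replaced by a power of $|x|$, and to invoke the non-linear ground state representation of Frank and Seiringer. Since $|u(x)-u(y)|^{p}$ is symmetric, relabelling $x\leftrightarrow y$ shows that the left-hand side of \eqref{hardy1} is unchanged when the weight $|x|^{-\al}|y|^{-\be}$ is replaced by its symmetrization $\tfrac12\bigl(|x|^{-\al}|y|^{-\be}+|x|^{-\be}|y|^{-\al}\bigr)$, which puts the $p$-form into the symmetric-kernel setting required by the representation. The virtual ground state will be the power function $w(x)=|x|^{-\gamma}$, with $\gamma$ a free parameter; the exponent that turns out to be optimal is $\gamma_\ast=(d-sp-\al-\be)/p$, so that $v(x)=|x|^{\gamma_\ast}u(x)$. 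Observe that $\gamma_\ast>0$ exactly when $sp+\al+\be<d$; this sign change, combined with the fact that the right-hand side weight $|x|^{-(sp+\al+\be)}$ is locally integrable at the origin precisely in that regime, is what forces the admissible test class to switch from $C_c(\R^d)$ to $C_c(\R^d\setminus\{0\})$.

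The heart of the proof is the evaluation of the weighted fractional $p$-Laplacian on $w$, that is, the principal-value integral
$$
\mathrm{p.v.}\int_{\R^d}|w(x)-w(y)|^{p-2}\bigl(w(x)-w(y)\bigr)\,\frac{|x|^{-\al}|y|^{-\be}+|x|^{-\be}|y|^{-\al}}{2\,|x-y|^{d+sp}}\,dy.
$$
By the scaling and rotational invariance of $w$ and of the symmetrized kernel --- rotating $x$ to $|x|e$, substituting $y=|x|z$, and integrating the angular variable over $\mathbb S^{d-1}$ --- this collapses to $\Phi(\gamma)\,|x|^{-(sp+\al+\be)}w(x)^{p-1}$ for a dimensionless constant $\Phi(\gamma)$. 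I expect this to be the main obstacle. One must first verify that the $y$-integral converges, which is exactly where the hypotheses $\al,\be,\al+\be\in(-sp,d)$ are needed (for integrability at the origin, at infinity, and, together with $s<1$, across the diagonal $y=x$), and then perform the angular and radial integrations to express $\Phi(\gamma)$ through Beta, equivalently $\FF$, functions and to identify it with the quantity in \eqref{C}. Maximising $\Phi(\gamma)$ over the admissible range of $\gamma$ then selects $\gamma=\gamma_\ast$ and yields the value $\mathcal{C}=\Phi(\gamma_\ast)$ of \eqref{C}.

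With $\Phi$ in hand, the Frank--Seiringer ground state representation applies: for each admissible $\gamma$, writing $u=wv$ turns
$$
\int_{\R^d}\int_{\R^d}\frac{|u(x)-u(y)|^{p}}{|x-y|^{d+sp}}|x|^{-\al}|y|^{-\be}\,dy\,dx-\Phi(\gamma)\int_{\R^d}\frac{|u(x)|^{p}}{|x|^{sp+\al+\be}}\,dx
$$
into a manifestly nonnegative $p$-form in $v$ carrying an explicit weight of the type $w(x)^{p/2}w(y)^{p/2}$; for $p\ge2$ this is the quantitative remainder of Theorem~\ref{tw4}, and for $p=2$ the representation is an identity. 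Discarding the remainder and taking $\gamma=\gamma_\ast$ gives \eqref{hardy1} with constant $\mathcal{C}$.

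It remains to prove that $\mathcal{C}$ is sharp, which I would do by testing \eqref{hardy1} against a family $u_\varepsilon$ obtained by truncating and mollifying the ground state $|x|^{-\gamma_\ast}$ away from $0$ and from $\infty$, and showing that the quotient of the two sides tends to $\mathcal{C}$ as $\varepsilon\to0$. The only delicate point is that $|x|^{-\gamma_\ast}$ is neither integrable nor compactly supported, so the truncation errors have to be controlled using precisely the convergence thresholds on $\al,\be$ found in the computation of $\Phi$; this has to be carried out on both ends and according to the sign of $\gamma_\ast$, i.e. separately in the two regimes $sp+\al+\be\lessgtr d$.
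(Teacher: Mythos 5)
Your proposal follows essentially the same route as the paper: symmetrize the kernel, take the ground state $w(x)=|x|^{-\gamma}$ with $\gamma=(d-\al-\be-sp)/p$, compute the weighted fractional $p$-Laplacian of $w$ by reduction to a radial integral (this is Lemma~\ref{lem.w1} and the constant \eqref{C}), apply the Frank--Seiringer general Hardy inequality, and prove sharpness by truncating the ground state $|x|^{-\gamma}$. The one caveat is that for $1\le p<2$ the difference of the two sides is \emph{not} a manifestly nonnegative $p$-form with weight $w(x)^{p/2}w(y)^{p/2}$ --- that remainder representation requires $p\ge 2$ --- so for general $p\ge1$ you should invoke \cite[Proposition 2.2]{MR2469027} (i.e.\ the inequality \eqref{generalhardy}) directly, which gives nonnegativity without an explicit remainder, exactly as the paper does.
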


For an arbitrary $p\geq 1$, the case $\al=\be=0$ of the inequality \eqref{hardy1} was obtained by Frank and Seiringer in \cite{MR2469027}, using the non-linear ``ground state" representation. Abdellaoui and Bentifour \cite{MR3626031} proved a version of \eqref{hardy1} for $\al=\be \in[0,(d-sp)/2)$.
The inequality \eqref{hardy1} in the unweighted case, i.e., for $\alpha=\beta=0$, and $p=2$,  was obtained first by Herbst \cite{MR0436854} and independently by Yafaev \cite{MR1717839} and Beckner \cite{MR2373619}. The Reader may also see \cite{FLS} for a direct computation via Fourier transform.

Again as for the half-space, for $p\geq 2$, the inequality \eqref{hardy1} holds with a remainder.

\begin{thm}[\textbf{Sharp weighted fractional Hardy inequality on $\R^d$ with remainder}]\label{tw2}
  Let $0<s<1$, $p\geq 2$ and
   $\alpha, \beta, \alpha+\beta \in (-sp,d)$.
   Then for all $u\in C_{c}(\R^d)$,  when $sp+\al+\be<d$ and for all $u\in C_{c}(\R^d\setminus\{0\})$, when $sp+\al+\be>d$, the following inequality holds,
\begin{align*}
\int_{\R^d}\int_{\R^d}\frac{|u(x)-u(y)|^{p}}{|x-y|^{d+sp}}\,|x|^{-\alpha}\,|y|^{-\beta}&\,dy\,dx-\mathcal{C}\int_{\R^d}\frac{|u(x)|^{p}}{|x|^{sp+\alpha+\beta}}\,dx\\
&\geq c_{p}\int_{\R^d}\int_{\R^d}\frac{|v(x)-v(y)|^{p}}{|x-y|^{d+sp}}\,|x|^{-\frac{d+\alpha-\beta-sp}{2}}\,|y|^{-\frac{d-\alpha+\beta-sp}{2}}\,dy\,dx,
\end{align*}
where $v(x)=|x|^{\frac{d-\alpha-\beta-sp}{p}}u(x)$, $\mathcal{C}$ is a constant from \eqref{C} and $c_p$ is a constant from \eqref{cp}. For $p=2$ this is an equality with $c_{2}=1$.
\end{thm}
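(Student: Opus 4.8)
The plan is to follow the non‑linear ground state representation of Frank and Seiringer, using the same scheme as for the half‑space result (Theorem~\ref{tw4}) but with the boundary weight $x_d^{-\bullet}$ replaced by the radial weight $|x|^{-\bullet}$. First I would symmetrize the kernel: since $|u(x)-u(y)|^{p}$ and $|x-y|^{-(d+sp)}$ are symmetric in $x,y$, the left‑hand side of \eqref{hardy1} equals $\iint_{\R^{d}\times\R^{d}}|u(x)-u(y)|^{p}k(x,y)\,dy\,dx$ with the symmetric kernel
\[
k(x,y)=\tfrac12\,\frac{|x|^{-\alpha}|y|^{-\beta}+|x|^{-\beta}|y|^{-\alpha}}{|x-y|^{d+sp}} .
\]
Then I would put $\omega(x)=|x|^{-\gamma}$ with $\gamma=\tfrac{d-\alpha-\beta-sp}{p}$, so that the function $v$ of the statement is exactly $v=u/\omega=|x|^{\gamma}u$.

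Next I would apply the ground state representation. For $p\ge 2$ it produces
\[
\iint_{\R^{d}\times\R^{d}}|u(x)-u(y)|^{p}k(x,y)\,dy\,dx\ \ge\ \int_{\R^{d}}V(x)\,|u(x)|^{p}\,dx+c_{p}\iint_{\R^{d}\times\R^{d}}|v(x)-v(y)|^{p}\,\omega(x)^{p/2}\omega(y)^{p/2}\,k(x,y)\,dy\,dx,
\]
where $c_{p}$ is the constant \eqref{cp} arising from the elementary inequality $(1-\tau)^{p}-\tau^{p}+p\tau^{p-1}\ge c_{p}$ (valid for $p\ge 2$, and reducing to an identity with $c_{2}=1$ when $p=2$), and the potential is
\[
V(x)=2\,\mathrm{p.v.}\!\int_{\R^{d}}\Big(1-\tfrac{\omega(y)}{\omega(x)}\Big)\Big|1-\tfrac{\omega(y)}{\omega(x)}\Big|^{p-2}k(x,y)\,dy .
\]

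Two bookkeeping identifications remain. For the remainder weight I would simply compute $\omega(x)^{p/2}\omega(y)^{p/2}k(x,y)$: since $\tfrac{p\gamma}{2}+\alpha=\tfrac{d+\alpha-\beta-sp}{2}$ and $\tfrac{p\gamma}{2}+\beta=\tfrac{d-\alpha+\beta-sp}{2}$, pairing $\omega(x)^{p/2}$ with $|x|^{-\alpha}$ and $\omega(y)^{p/2}$ with $|y|^{-\beta}$ and then undoing the symmetrization (again by symmetry of $|v(x)-v(y)|^{p}|x-y|^{-(d+sp)}$) recovers exactly the kernel on the right‑hand side of the statement. For the potential, scale‑ and rotation‑invariance of $k$ and $\omega$ force $V$ to be radial and homogeneous of degree $-(sp+\alpha+\beta)$, hence $V(x)=\mathcal{C}\,|x|^{-(sp+\alpha+\beta)}$ with $\mathcal{C}=V(e_{1})$; reducing the $d$‑dimensional principal‑value integral to a one‑dimensional one via spherical coordinates and the substitution $r=|y|$ yields the Beta/hypergeometric expression \eqref{C}. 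This is the same evaluation that gives the sharp constant in Theorem~\ref{tw1}, so I would invoke it rather than repeat it. Assembling the three pieces gives \eqref{hardy1} with the stated remainder, and for $p=2$ the representation is an identity with $c_{2}=1$, so the displayed inequality becomes an equality.

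The main obstacle is the justification of the principal‑value integral defining $V$ and the verification of the representation's hypotheses for the singular radial weight $\omega=|x|^{-\gamma}$. Near $y=x$ the factor $|x-y|^{-(d+sp)}$ is non‑integrable and is tamed only in the principal‑value sense, through cancellation of the odd part of $1-\omega(y)/\omega(x)$ together with $0<s<1$; near $y=0$ and $y=\infty$ convergence requires precisely the ranges $\alpha,\beta,\alpha+\beta\in(-sp,d)$. Finally, the dichotomy $sp+\alpha+\beta\lessgtr d$ determines the admissible class ($C_{c}(\R^{d})$ versus $C_{c}(\R^{d}\setminus\{0\})$), guaranteeing both that $v=|x|^{\gamma}u$ is an admissible competitor and that $\int V|u|^{p}$ and the remainder integral converge; checking these integrability conditions, and in particular the vanishing of $u$ near the origin in the supercritical range, is the delicate point.
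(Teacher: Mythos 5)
Your proposal is correct and follows essentially the same route as the paper: the paper's proof of Theorem~\ref{tw2} is exactly the combination of the general ground-state representation with remainder \eqref{generalhardyremainder} applied to the symmetrized kernel $k(x,y)=\tfrac12|x-y|^{-d-sp}(|x|^{-\alpha}|y|^{-\beta}+|x|^{-\beta}|y|^{-\alpha})$ and $w(x)=|x|^{-\gamma}$, together with Lemma~\ref{lem.w1} identifying $V(x)=\mathcal{C}|x|^{-(sp+\alpha+\beta)}$. Your bookkeeping for the remainder weight and your remarks on the convergence issues match what the paper delegates to that lemma.
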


The above inequality was proved for $\alpha=\beta=0$ in \cite{MR2469027},
and for $\alpha=\beta$ in  \cite{MR3626031}. In the latter paper, the Authors also considered
different forms of the remainder.

An obvious consequence of Theorem \ref{tw2} is that the Hardy inequality \eqref{hardy1} is strict for any nonzero function $u$, if $p\geq 2$.

In contrast to the unweighted case, it turns out that the weighted Gagliardo seminorms may be finite also for $s=0$ and $C_c^\infty$ functions, see for example \cite[Proof of Lemma 2.1]{MR3420496} for the proof on $\R^d$ (the proof for the half-space works analogously). In consequence, by passing to the limit with $s\rightarrow 0^+$ in \eqref{hardy2} and \eqref{hardy1}, we obtain the following weighted fractional Hardy inequalities related to the $0$-order kernel $|x-y|^{-d}$.
\begin{cor}[\textbf{Weighted fractional Hardy inequalities for $s=0$}]\label{corollary7}
Let $p\geq 1$, $\al,\be,\al+\be\in(0,d)$  and $u\in C_c(\R^d)$. Then, the following inequality holds true,
\begin{equation}\label{hardy_s=0_Rd}
\int_{\R^d}\int_{\R^d}\frac{|u(x)-u(y)|^p}{|x-y|^{d}}|x|^{-\al}|y|^{-\be}\,dy\,dx\geq\mathcal{C}_0\int_{\R^d}\frac{|u(x)|^p}{|x|^{\al+\be}}\,dx,    
\end{equation}
where $\mathcal{C}_0 = \mathcal{C}(d,0,p,\al,\be)$ is given by \eqref{C} or \eqref{C0}.

Moreover, if $\al,\be,\al+\be\in(-1,0)$ and $u\in C_c(\R^d_+)$, then
\begin{equation}\label{hardy_s=0_Rd+}
\int_{\R^d_+}\int_{\R^d_+}\frac{|u(x)-u(y)|^p}{|x-y|^{d}}x_d^{\al}y_d^{\be}\,dy\,dx\geq\mathcal{D}_0\int_{\R^d_+}|u(x)|^p x_d^{\al+\be}\,dx,    
\end{equation}
where $\mathcal{D}_0 = \mathcal{D}(d,0,p,\al,\be)$ is given by \eqref{D}.
\end{cor}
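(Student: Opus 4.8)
The plan is to derive both \eqref{hardy_s=0_Rd} and \eqref{hardy_s=0_Rd+} by letting $s\to 0^{+}$ in the sharp inequalities already established in Theorem~\ref{tw1} and Theorem~\ref{tw3}, with the test function held fixed. Fix $u\in C_c^{\infty}(\R^d)$ in the first case and $u\in C_c^{\infty}(\R^d_+)$ in the second. Since $\al+\be<d$ in the first case and $\al,\be<0<sp$ in the second, all hypotheses of the respective theorem are met for every sufficiently small $s>0$ (in the half-space one only needs to avoid the single value $sp=1+\al+\be$, which is harmless as $s\to 0^{+}$). Thus \eqref{hardy1} and \eqref{hardy2} hold along a sequence $s\to 0^{+}$, and it remains to pass to the limit in the two integrals and in the optimal constant.

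The constant causes no trouble: the formula \eqref{C} (resp.\ \eqref{D}) is an explicit expression that extends continuously to $s=0$, its value there being $\mathcal{C}_0$, equivalently the form \eqref{C0} (resp.\ $\mathcal{D}_0$); hence $\mathcal{C}(d,s,p,\al,\be)\to\mathcal{C}_0$ and $\mathcal{D}(d,s,p,\al,\be)\to\mathcal{D}_0$. For the right-hand sides, dominated convergence gives $\int_{\R^d}|u|^{p}|x|^{-(sp+\al+\be)}\,dx\to\int_{\R^d}|u|^{p}|x|^{-(\al+\be)}\,dx$ and $\int_{\R^d_+}|u|^{p}x_d^{-(sp-\al-\be)}\,dx\to\int_{\R^d_+}|u|^{p}x_d^{\al+\be}\,dx$, since on the fixed compact support the singular exponents stay below $d$ (resp.\ above $-1$) for all small $s$, producing an $s$-independent integrable majorant.

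The essential point is the convergence of the left-hand double integral, for which I would use dominated convergence after splitting the domain according to $|x-y|\geq 1$ and $|x-y|<1$. The integrand converges pointwise to the $s=0$ integrand because $|x-y|^{-(d+sp)}\to|x-y|^{-d}$. On $\{|x-y|\geq 1\}$ one has $|x-y|^{-(d+sp)}\leq|x-y|^{-d}$, so the integrand is dominated by the limiting one, whose integrability over this region is exactly the finiteness of the weighted Gagliardo seminorm at $s=0$; here the decay of the weights at infinity ($\al,\be>0$ for $\R^d$, $\al,\be<0$ for $\R^d_+$) makes the tail convergent, and this is the substantive input, established as in \cite[Proof of Lemma 2.1]{MR3420496} and its half-space analogue. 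On $\{|x-y|<1\}$ and for $s\leq s_0<1$ one has $|x-y|^{-(d+sp)}\leq|x-y|^{-(d+s_0 p)}$; combined with the Lipschitz bound $|u(x)-u(y)|\leq L|x-y|$ the integrand is dominated by $L^{p}|x-y|^{p(1-s_0)-d}$ times the weight, which is integrable near the diagonal because $p(1-s_0)>0$.

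The main obstacle is to make the near-diagonal majorant compatible with the singularities of the weights at the origin (resp.\ at the boundary): in a neighbourhood of the origin the factor $|x-y|^{p(1-s_0)-d}$ and the weight $|x|^{-\al}|y|^{-\be}$ are simultaneously large, so one must verify, after localizing, that their product remains integrable; this is where the constraints $\al,\be<d$ (resp.\ $\al,\be>-1$) are used. Once both regions yield $s$-independent integrable majorants, dominated convergence delivers the convergence of the left-hand side, and assembling the three limits in \eqref{hardy1} and \eqref{hardy2} produces \eqref{hardy_s=0_Rd} and \eqref{hardy_s=0_Rd+}. Passing from $C_c^{\infty}$ to $C_c$ functions, if needed, would follow by a routine approximation argument.
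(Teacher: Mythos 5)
Your proposal follows essentially the same route as the paper: the paper's entire justification for Corollary~\ref{corollary7} is the sentence preceding its statement, namely that the weighted Gagliardo seminorms remain finite at $s=0$ for smooth compactly supported functions (citing \cite[Proof of Lemma 2.1]{MR3420496} and its half-space analogue) and that one then passes to the limit $s\to 0^{+}$ in \eqref{hardy1} and \eqref{hardy2}. The dominated-convergence details you supply (splitting at $|x-y|=1$, the Lipschitz bound near the diagonal, the decay of the weights at infinity, and the continuity of the constants in $s$) are precisely the content of that cited argument, so your proof matches the paper's.
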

Considering the limit $s\rightarrow 0^+$ in the appriopriate inequalities, one may also easily obtain a versions of \eqref{hardy_s=0_Rd} and \eqref{hardy_s=0_Rd+} with a remainder for $p\geq 2$.

\section{General Hardy inequalities}\label{sec:generalhardy}
The proofs of our results are based on the general fractional Hardy inequalities and the non-linear ground state representation, established by Frank and Seiringer in \cite{MR2469027}. Following \cite{MR2469027}, let $k(x,y)$ be a symmetric, positive kernel and $\Omega\subset\R^d$ be an open, nonempty set. Let us define the functional 
\[
E[u]=\int_{\Omega}\int_{\Omega}|u(x)-u(y)|^{p}k(x,y)\,dy\,dx
\]
and 
\begin{equation}\label{eq:plaplace}
V_{\varepsilon}(x)=2w(x)^{-p+1}\int_\Omega\left(w(x)-w(y)\right)\left|w(x)-w(y)\right|^{p-2}k_{\varepsilon}(x,y)\,dy,
\end{equation}
where $w$ is a positive, measurable function on $\Omega$ and $\{k_{\varepsilon}(x,y)\}_{\varepsilon>0}$ is a family of measurable, symmetric kernels satisfying the assumptions $0\leq k_\varepsilon(x,y)\leq k(x,y)$, $\lim_{\varepsilon\rightarrow 0}k_\varepsilon(x,y)=k(x,y)$ for all $x,y\in\Omega$. Then, if the integrals defining $V_\varepsilon(x)$ are absolutely convergent for almost all $x\in\Omega$ and converge weakly to some $V$ in $L^{1}_{loc}(\Omega)$, we have the following Hardy-type inequality, 
\begin{equation}\label{generalhardy}
 E[u]\geq\int_{\Omega}|u(x)|^{p}V(x)\,dx,   
\end{equation}
for all compactly supported $u$ with $\int_{\Omega}|u(x)|^{p}V_{+}(x)\,dx$ finite, see \cite[Proposition 2.2]{MR2469027}.

Moreover, if in addition $p\geq 2$, then the inequality \eqref{generalhardy} can be improved by a remainder
\begin{equation}
E_{w}[v]=\int_{\Omega}\int_{\Omega}|v(x)-v(y)|^{p}w(x)^{\frac{p}{2}}k(x,y)w(y)^{\frac{p}{2}}\,dy\,dx,\quad u=wv,  
\end{equation}
that is the inequality
\begin{equation}\label{generalhardyremainder}
 E[u]-\int_{\Omega}|u(x)|^{p}V(x)\,dx\geq c_{p}E_{w}[v]  
\end{equation}
holds with the same assumptions as in \eqref{generalhardy}, with the constant $c_p$ from \eqref{cp}, see \cite[Proposition 2.3]{MR2469027}. When $p=2$, the inequality \eqref{generalhardyremainder} becomes an equality.

\section{Fractional weighted $p$-Laplacian and power functions}

We saw in Section~\ref{sec:generalhardy} that in order to obtain an explicit
Hardy inequality, it suffices to calculate the limit of $V_\varepsilon$ defined in \eqref{eq:plaplace}.
This section is devoted to such calculations.

By $\Gamma$ we denote the Euler Gamma function. We consider the Gamma function to be defined on $\C\setminus \{0,-1,\ldots\}$, but $1/\Gamma$ to be defined everywhere on $\C$, with zeroes at $0,-1,\ldots$. Then Euler Beta function may be defined as $\BB(a,b)=\Gamma(a)\Gamma(b)(1/\Gamma)(a+b)$ for all $a,b\neq 0,-1,\ldots$.

\subsection{The case of $\R^d\setminus \{0\}$}

In this subsection we will denote $$\gamma=\frac{d-\alpha-\beta-sp}{p},\,w(x)=|x|^{-\gamma},\,k(x,y)=\frac{1}{2}|x-y|^{-d-sp}\left(|x|^{-\alpha}|y|^{-\beta}+|x|^{-\beta}|y|^{-\alpha}\right).
$$
Furthermore, we put
\begin{equation}\label{C}
\mathcal{C}=\mathcal{C}(d,s,p,\alpha,\beta)=\int_{0}^{1}r^{sp-1}\left(r^{\alpha}+r^{\beta}\right)\left|1-r^{\frac{d-\alpha-\beta-sp}{p}}\right|^{p}\Phi_{d,s,p}(r)\,dr.
\end{equation}
The function $\Phi_{d,s,p}$ is defined as 
\begin{equation}\label{Phi}
\Phi_{d,s,p}(r)=\begin{cases}
\left|\mathbb{S}^{d-2}\right|\displaystyle\int_{-1}^{1}\frac{\left(1-t^{2}\right)^{\frac{d-3}{2}}}{\left(1-2tr+ r^2
  \right)^{\frac{d+sp}{2}}}\,dt,\,d\geq 2 \\
(1-r)^{-1-sp}+(1+r)^{-1-sp},\,d=1.
\end{cases}
\end{equation}

Following \cite{MR2469027}, we remark here that for $d\geq 2$, by \cite[(3.665)]{MR2360010} the function from \eqref{Phi} can also written in the form
\begin{equation}\label{PhiFF}
  \Phi_{d,s,p}(r)=\left|\mathbb{S}^{d-1}\right| \FF\left(\frac{d+sp}{2},\frac{2+sp}{2};\frac{d}{2};r^2\right),
\end{equation}
where $\BB$ is the Euler Beta function and $\FF$ is the Gauss hypergeometric function.
We also note that
\begin{equation}\label{eq:Phibound}
  (1-r)^{sp+1}\Phi_{d,s,p}(r) \quad \textrm{is bounded for $r\in (0,1)$,}
\end{equation}
which was shown in \cite[between (3.4) and (3.5)]{MR2469027}.

\begin{lem}\label{lem.w1}
 Let  $\alpha, \beta, \alpha+\beta \in (-sp,d)$.
It holds
\begin{equation}\label{eq:VeRd}
 2\lim_{\varepsilon\rightarrow 0}\int_{||x|-|y||>\varepsilon}\left(w(x)-w(y)\right)|w(x)-w(y)|^{p-2}k(x,y)\,dy=\frac{\mathcal{C}(d,s,p,\alpha,\beta)}{|x|^{sp+\alpha+\beta}}w(x)^{p-1}
 \end{equation}
 uniformly on compacts sets contained in $\R^{d}\setminus\{0\}$. The constant $\mathcal{C}(d,s,p,\al,\be)$ is given by \eqref{C}.
\end{lem}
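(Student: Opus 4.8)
The plan is to compute the limit in \eqref{eq:VeRd} by exploiting the scaling structure of the power weight $w(x)=|x|^{-\gamma}$ together with the homogeneity of the kernel $k(x,y)$. Fix $x\in\R^d\setminus\{0\}$ and write $|x|=\rho$. The natural substitution is $y=\rho z$, which rescales the whole integral: using $w(\rho z)=\rho^{-\gamma}|z|^{-\gamma}$ and the fact that $|x-y|^{-d-sp}|x|^{-\alpha}|y|^{-\beta}$ picks up a homogeneity factor, one checks that the integral over $y$ reduces, after extracting the correct power of $\rho=|x|$, to an integral over $z\in\R^d$ against the unit vector $\hat x=x/|x|$. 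By rotational symmetry this integral does not depend on the direction $\hat x$, only on $|x|=\rho$, which is what forces the factor $|x|^{-(sp+\alpha+\beta)}w(x)^{p-1}$ on the right-hand side. Concretely, I would first verify the exponent bookkeeping: $w(x)^{p-1}=|x|^{-\gamma(p-1)}$ and the target scaling $|x|^{-(sp+\alpha+\beta)}$ combine to $|x|^{-(sp+\alpha+\beta)-\gamma(p-1)}$, and one confirms this matches the homogeneity degree of the left-hand integrand after the substitution, using $\gamma=(d-\alpha-\beta-sp)/p$.

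Next I would reduce the resulting direction-independent integral over $z\in\R^d$ to a one-dimensional integral over the radial variable $r=|z|$. Writing $z=r\omega$ with $\omega\in\mathbb S^{d-1}$ and integrating out the angular variables, the angular integral of $|x-y|^{-d-sp}=\rho^{-d-sp}|\hat x-r\omega|^{-d-sp}$ against the spherical measure produces exactly the kernel $\Phi_{d,s,p}(r)$ from \eqref{Phi}; this is the step where the definition of $\Phi_{d,s,p}$ is designed to appear, and for $d\geq 2$ it is the integral $\int_{\mathbb S^{d-2}}$-type reduction recorded in \eqref{Phi}, while for $d=1$ the two boundary terms $(1-r)^{-1-sp}+(1+r)^{-1-sp}$ arise directly. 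After this angular reduction, the symmetrized kernel $k$ contributes the factor $r^{\alpha}+r^{\beta}$ (from the two terms $|x|^{-\alpha}|y|^{-\beta}$ and $|x|^{-\beta}|y|^{-\alpha}$ under $y=\rho z$), and the sign/absolute-value structure $(w(x)-w(y))|w(x)-w(y)|^{p-2}=(1-r^{-\gamma})|1-r^{-\gamma}|^{p-2}$ survives, which after changing variables $r\mapsto 1/r$ on the region $r>1$ folds the integral onto $(0,1)$ and yields the factor $|1-r^{\gamma}|^{p}$ appearing in \eqref{C}. Assembling these pieces reproduces \eqref{C} exactly.

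The principal obstacle, and the reason the cutoff $||x|-|y||>\varepsilon$ appears, is the integrability near the diagonal $|y|=|x|$, i.e.\ near $r=1$. There the factor $|1-r^{\gamma}|^{p}$ vanishes like $|r-1|^{p}$, while $\Phi_{d,s,p}(r)$ blows up like $(1-r)^{-1-sp}$ by the bound \eqref{eq:Phibound}; since $p>sp$ (because $s<1$), the product is integrable and the principal-value cutoff can be removed in the limit $\varepsilon\to0$. I would make this rigorous by splitting the $r$-integral into a region away from $1$, where convergence is routine, and a shrinking neighborhood of $r=1$, where I dominate using \eqref{eq:Phibound} and the $p$-th order vanishing of $|1-r^\gamma|^p$, together with a first-order Taylor expansion $1-r^{\gamma}\approx -\gamma(r-1)$, to show the contribution of the cut region tends to zero and that the limit is absolutely convergent. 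The claim that the convergence is uniform on compact subsets of $\R^d\setminus\{0\}$ then follows because, after the scaling, the $r$-integrand is independent of $x$ and the prefactors $|x|^{-(sp+\alpha+\beta)}w(x)^{p-1}$ are continuous and bounded on compacts away from the origin; I would also check that the parameter constraints $\alpha,\beta,\alpha+\beta\in(-sp,d)$ guarantee convergence of the radial integral at the endpoints $r\to0^+$ and $r\to\infty$ (equivalently $r\to0^+$ after folding), which is where those hypotheses are used.
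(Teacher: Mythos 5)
Your proposal is correct and follows essentially the same route as the paper's proof (which itself follows Frank and Seiringer): the scaling $y=|x|z$, the angular reduction producing $\Phi_{d,s,p}$, the folding $r\mapsto 1/r$ that yields the factor $\left|1-r^{\gamma}\right|^{p}$, and the use of \eqref{eq:Phibound} together with the hypotheses $\alpha,\beta,\alpha+\beta\in(-sp,d)$ to control the singularity at $r=1$ and the endpoints. The one point to be careful about in a full write-up is that each half of the principal value may fail to converge absolutely (the unfolded integrand vanishes only to order $p-1$ at $r=1$), so the substitution $r\mapsto 1/r$ must be carried out on the truncated integrals before passing to the limit --- which is precisely what your ``contribution of the cut region tends to zero'' step is meant to justify.
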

\begin{proof}
The proof follows \cite[Proof of Lemma 3.1]{MR2469027} and also \cite[Proof of Lemma 2.6]{MR3626031}, nevertheless we provide it for Reader's convenience. We observe that
\begin{align}\label{eq:Ieps}
2\int_{||x|-|y||>\varepsilon}\left(w(x)-w(y)\right)|w(x)-w(y)|^{p-2}k(x,y)\,dy=|x|^{-\alpha}I_{\varepsilon}(\beta)+|x|^{-\beta}I_{\varepsilon}(\alpha),   
\end{align}
where, for $a\in\{-\alpha,-\beta\}$ and $r=|x|$,
$$
I_{\varepsilon}(a)=r^{-d+1}\int_{|\rho-r|>\varepsilon}\frac{\text{sgn}\left(\rho^{\gamma}-r^{\gamma}\right)}{|\rho-r|^{2-p(1-s)}}\varphi(\rho,r)\,\rho^{a}\,d\rho
$$
with
$$
\varphi(\rho,r)=\left|\frac{\rho^{-\gamma}-r^{-\gamma}}{\rho-r}\right|^{p-1}\times\begin{cases}
\rho^{d-1}\left(1-\frac{\rho}{r}\right)^{1+sp}\Phi\left(\frac{\rho}{r}\right),\,\rho<r\\
r^{d-1}\left(1-\frac{r}{\rho}\right)^{1+sp}\Phi\left(\frac{r}{\rho}\right),\,\rho>r,
\end{cases}
$$
where $\Phi=\Phi_{d,s,p}$ is given by (\ref{Phi}).
 The convergence of the integral in \eqref{eq:Ieps} (at $0$ and at $\infty$) follows from
  the assumptions $\alpha, \beta, \alpha+\beta \in (-sp,d)$.
By exactly the same arguments
as in \cite{MR2469027} we have
\begin{align*}
I_{\varepsilon}(a)=r^{-\gamma(p-1) -sp + a}\int_{|\rho-1|>\varepsilon}\frac{\text{sgn}\left(\rho^{\gamma}-1\right)}{|\rho-1|^{2-p(1-s)}}\varphi(\rho,1)\,\rho^{a}\,d\rho    
\end{align*}
and 
\begin{align*}
  \lim_{\varepsilon\rightarrow 0}I_{\varepsilon}(a)&=r^{-\gamma(p-1) -sp +a}
 \, \lim_{\varepsilon\to 0} \left( \int_0^{1-\varepsilon} + \int_{1+\varepsilon}^\infty \right) 
\frac{\text{sgn}\left(\rho^{\gamma}-1\right)}{|\rho-1|^{2-p(1-s)}}\varphi(\rho,1)\,\rho^{a}\,d\rho\\
&=r^{-\gamma(p-1) -sp +a}\,\text{sgn}(\gamma)\int_{0}^{1}\rho^{sp-1}\Phi(\rho)\left(\rho^{-a}-\rho^{ -\gamma(p-1)+a-sp+d}\right)\left|1-\rho^{\gamma}\right|^{p-1}\,d\rho.
\end{align*}
 The last integral is convergent because of the assumptions on  $\alpha$, $\beta$ and the bound~\eqref{eq:Phibound}.
The final form of the constant $\mathcal{C}(d,s,p,\alpha,\beta)$ follows from the easy calculation
\begin{align*}
  &
 t^{sp-1}
  \text{sgn}(\gamma)\left(t^{\alpha}-t^{ -\gamma(p-1)-\alpha-sp+d}+t^{\beta}-t^{ -\gamma(p-1)-\beta-sp+d}\right)\left|1-t^{\gamma}\right|^{p-1}\\
&=t^{sp-1}\left(t^{\alpha}+t^{\beta}\right)\left|1-t^{\gamma}\right|^{p}.\qedhere
\end{align*}
\end{proof}

For $p=2$ the  constant \eqref{C} can be written explicitly, as shown in the following result.

\begin{prop}\label{prop:C}
Let  $\alpha, \beta, \alpha+\beta \in (-2s,d)$  and $0<s<1$.
  It holds
  
\begin{align}\label{Cds2}
\mathcal{C}(d,s,2,\al,\be)=\frac{\pi^{\frac{d}{2}}\left|\Gamma(-s)\right|}{\Gamma\left(\frac{d+2s}{2}\right)}\Bigg[&\frac{2\,\Gamma\left(\frac{d-\al+\be+2s}{4}\right)\Gamma\left(\frac{d+\al-\be+2s}{4}\right)}{\Gamma\left(\frac{d+\al-\be-2s}{4}\right)\Gamma\left(\frac{d-\al+\be-2s}{4}\right)}\\
  &
  \nonumber
  -\frac{\Gamma\left(\frac{\al+2s}{2}\right)\Gamma\left(\frac{d-\al}{2}\right)}{\Gamma\left(\frac{d-\al-2s}{2}\right)\Gamma\left(\frac{\al}{2}\right)}-\frac{\Gamma\left(\frac{\be+2s}{2}\right)\Gamma\left(\frac{d-\be}{2}\right)}{\Gamma\left(\frac{d-\be-2s}{2}\right)\Gamma\left(\frac{\be}{2}\right)}\Bigg].
\end{align}

 Noteworthly, for $\al=\be$ we have
$$
\mathcal{C}(d,s,2,\al,\al)=\frac{2\pi^{\frac{d}{2}}\left|\Gamma(-s)\right|}{\Gamma\left(\frac{d+2s}{2}\right)}\left[\frac{\Gamma^{2}\left(\frac{d+2s}{4}\right)}{\Gamma^{2}\left(\frac{d-2s}{4}\right)}-\frac{\Gamma\left(\frac{\al+2s}{2}\right)\Gamma\left(\frac{d-\al}{2}\right)}{\Gamma\left(\frac{d-\al-2s}{2}\right)\Gamma\left(\frac{\al}{2}\right)}\right].
$$
In particular, when $\al=\be=0$, one resolves the constant from classical unweighted case  of fractional Hardy inequality, 
\[
\mathcal{C}(d,s,2,0,0)=\frac{2\pi^{\frac{d}{2}}\left|\Gamma(-s)\right|\Gamma^{2}\left(\frac{d+2s}{4}\right)}{\Gamma\left(\frac{d+2s}{2}\right)\Gamma^{2}\left(\frac{d-2s}{4}\right)}.
\]
\end{prop}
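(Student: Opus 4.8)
The plan is to reduce the problem, for $p=2$, to a single classical beta-type integral over $\R^d$ and to exploit a hidden symmetry of the six exponents produced by expanding the square in \eqref{C}. First I would set $\gamma=\frac{d-\al-\be-2s}{2}$ and, since $p=2$, use $|1-r^{\gamma}|^{2}=1-2r^{\gamma}+r^{2\gamma}$ to write \eqref{C} as a combination of six integrals $\int_{0}^{1}r^{c-1}\Phi_{d,s,2}(r)\,dr$, where $c$ ranges over $2s+\al,\,2s+\be,\,d-\al,\,d-\be$ with coefficient $+1$ and over $\frac{d+\al-\be+2s}{2},\,\frac{d-\al+\be+2s}{2}$ with coefficient $-2$.

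Next I would use the spherical representation $\Phi_{d,s,2}(r)=\int_{\mathbb S^{d-1}}|e-r\omega|^{-d-2s}\,d\omega$ for a fixed unit vector $e$, which follows from \eqref{Phi} by setting $t=e\cdot\omega$. Passing to polar coordinates $y=r\omega$ then identifies $\int_{0}^{1}r^{c-1}\Phi_{d,s,2}(r)\,dr$ with the ball integral $J(c):=\int_{|y|<1}|y|^{c-d}|e-y|^{-d-2s}\,dy$. The Kelvin inversion $y\mapsto y/|y|^{2}$, under which $\big|e-y/|y|^{2}\big|=|e-y|/|y|$, maps the exterior of the unit ball onto its interior and gives $\int_{|y|>1}|y|^{c-d}|e-y|^{-d-2s}\,dy=J(d+2s-c)$. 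The six exponents pair up exactly under the involution $c\mapsto d+2s-c$, namely $2s+\al\leftrightarrow d-\al$, $2s+\be\leftrightarrow d-\be$, and $\frac{d+\al-\be+2s}{2}\leftrightarrow\frac{d-\al+\be+2s}{2}$, so summing each pair assembles a full-space integral and yields
\[
\mathcal C(d,s,2,\al,\be)=F(2s+\al)+F(2s+\be)-2F\!\left(\tfrac{d+\al-\be+2s}{2}\right),\qquad F(c):=\int_{\R^d}|y|^{c-d}|e-y|^{-d-2s}\,dy.
\]

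Each $F(c)$ would then be evaluated from the standard identity $\int_{\R^d}|y|^{-\lambda}|e-y|^{-\mu}\,dy=\pi^{d/2}\frac{\Gamma(\frac{d-\lambda}{2})\Gamma(\frac{d-\mu}{2})\Gamma(\frac{\lambda+\mu-d}{2})}{\Gamma(\frac{\lambda}{2})\Gamma(\frac{\mu}{2})\Gamma(d-\frac{\lambda+\mu}{2})}$ with $\lambda=d-c$ and $\mu=d+2s$, which gives $F(c)=\frac{\pi^{d/2}\Gamma(-s)}{\Gamma(\frac{d+2s}{2})}\cdot\frac{\Gamma(\frac{c}{2})\Gamma(\frac{d+2s-c}{2})}{\Gamma(\frac{d-c}{2})\Gamma(\frac{c-2s}{2})}$. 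Substituting the three values of $c$ and using $\Gamma(-s)=-|\Gamma(-s)|$ for $0<s<1$ collects precisely the three Gamma quotients of \eqref{Cds2}; the cases $\al=\be$ and $\al=\be=0$ follow by inspection, the last two quotients dropping out when $\al=\be=0$ because $1/\Gamma(\al/2)\to0$.

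The one genuinely delicate point, and the main obstacle, is convergence: since $\mu=d+2s>d$, the singularity of $|e-y|^{-d-2s}$ at $y=e$ is non-integrable, so none of the individual integrals $J(c)$ or $F(c)$ actually converge; equivalently, the Mellin transform of $\Phi_{d,s,2}$ has empty strip of convergence because $\Phi_{d,s,2}(r)=O\big((1-r)^{-1-2s}\big)$ as $r\to1^{-}$ by \eqref{eq:Phibound}. What rescues the computation is that the coefficients $1,1,-2$ sum to zero and $(r^{\al}+r^{\be})(1-r^{\gamma})^{2}$ vanishes to second order at $r=1$, so the integral \eqref{C} converges even though the formal splitting does not. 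I would justify the splitting by analytic continuation in the order of the kernel: replace $2s$ by a parameter $\sigma<0$ (so $\mu<d$), where the beta integral and all the manipulations above hold literally, verify that both \eqref{C} and the Gamma-function expression are analytic in $\sigma$ on a common strip, and continue to $\sigma=2s$. The factor $\Gamma(-s)$, which equals the positive $\Gamma(\frac{d-\mu}{2})$ in the convergent regime but is negative at $\mu=d+2s$, is exactly the signature of this continuation and explains the appearance of $|\Gamma(-s)|$ in \eqref{Cds2}.
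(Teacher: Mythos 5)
Your computational core is correct and takes a genuinely different route from the paper. The paper does not touch the one-dimensional integral \eqref{C} directly: it quotes the known formula for $\mathcal{A}_{d,-2s}^{-1}\Delta^s_{\R^d}|x|^{-b}$ from Frank--Lieb--Seiringer, splits the product $(|x|^{-\gamma}-|y|^{-\gamma})(|x|^{-\al}|y|^{-\be}+|x|^{-\be}|y|^{-\al})$ by the algebraic identity \eqref{eq:xaxb} into four such terms, and then identifies the constant by comparing with \eqref{eq:VeRd}. You instead evaluate \eqref{C} head-on: expanding the square, recognizing $\int_0^1 r^{c-1}\Phi_{d,s,2}(r)\,dr$ as a ball integral, using Kelvin inversion to pair the six exponents under $c\mapsto d+2s-c$ (the pairs $2s+\al\leftrightarrow d-\al$, etc., check out, as does the coefficient bookkeeping $\sum a_i=\sum a_ic_i=0$ which gives the second-order vanishing at $r=1$), and finishing with the Riesz composition formula. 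Your $F(c)$ reproduces exactly the paper's $\lambda_{d,\cdot,s}$ quotients, so the final Gamma expressions agree. Your route is more self-contained (it re-derives the FLS formula rather than citing it) and, pleasantly, your continuation in the order $\sigma$ disposes of the case $d=1$, $2s\ge d$ in one stroke, for which the paper needs a separate continuation in $s$. You correctly identify the divergence of the individual pieces as the main obstacle and correctly explain the provenance of the sign $\Gamma(-s)=-|\Gamma(-s)|$.

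The genuine gap is in the range covered by your regularization. You fix $\al,\be$ and continue in $\sigma$ from $\sigma<0$ up to $\sigma=2s$. But the integral \eqref{C} (with $sp$ replaced by $\sigma$) converges near $r=0$ only when $\al,\be>-\sigma$, and the beta-integral conditions $0<c<d+\sigma$ for the six exponents likewise force $\al,\be\in(-\sigma,d)$; so when $\min(\al,\be)\le 0$ there is \emph{no} admissible starting point $\sigma<0$, and the continuation cannot even begin. As written, your argument therefore proves \eqref{Cds2} only for $\al,\be\in(0,d)$, not on the full hexagon $\al,\be,\al+\be\in(-2s,d)$ claimed in the proposition. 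The repair is exactly the paper's second step: having established the identity on an open subset, observe that both sides of \eqref{C} and \eqref{Cds2} are holomorphic in $(\al,\be)$ on $\{\Ree\al,\Ree\be,\Ree(\al+\be)\in(-2s,d)\}$ and invoke the identity theorem (Figure~\ref{fig:domainhol}); you need to add this. A smaller point you should make explicit: your ``common strip'' in $\sigma$ must cross $\sigma=0$, where each individual $F(c)$ blows up because of the simple pole of $\Gamma(-\sigma/2)$; the combination $F(2s+\al)+F(2s+\be)-2F\bigl(\tfrac{d+\al-\be+2s}{2}\bigr)$ is regular there only because the three quotients all equal $1$ at $\sigma=0$ so the residues cancel — this needs to be checked for the holomorphy claim to stand.
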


\begin{proof}
First, we assume that
 $2s<d$ and
    $\al,\be,\al+\be\in(0, d-2s)$.  We are going to prove the result by calculating the left hand side of \eqref{eq:VeRd}
  in another way.   
Let $\Omega$ be an open subset of $\R^d$. Recall that the \emph{regional fractional Laplacian} is defined for $u\in C^2(\Omega)$ as 
$$
\Delta^{s}_\Omega u(x)= \mathcal{A}_{d,-2s} \lim_{\varepsilon\to 0^+}
\int_{\Omega\cap \{|y-x|>\varepsilon\}}
\frac{u (y)-u(x)}{|x-y|^{d+2s}} \,dy\,, 
$$  
where $\mathcal{A}_{d,-2s}=\frac{4^{s}\Gamma(\frac{d+2s}{2})}{\pi^{d/2}|\Gamma(-s)|}$. We will use the notation $L = \mathcal{A}_{d,-2s}^{-1} \Delta^{s}_\Omega.$

Let $w(x)=|x|^{-b}$ for $b\in (0,d-2s)$. By \cite[(3.3) and (3.4)]{FLS}, the value of the fractional Laplace operator $L=L_{\R^d}$ acting on the function $w$ is
\begin{equation}\label{modul}
 Lw(x)=-\frac{\pi^{\frac{d}{2}}\Gamma\left(\frac{b+2s}{2}\right)\Gamma\left(\frac{d-b}{2}\right)\left|\Gamma(-s)\right|}{\Gamma\left(\frac{d+2s}{2}\right)\Gamma\left(\frac{d-b-2s}{2}\right)\Gamma\left(\frac{b}{2}\right)}|x|^{-b-2s}.   
\end{equation}
Denote 
$$
\lambda_{d,b,s}:=\frac{\Gamma\left(\frac{b+2s}{2}\right)\Gamma\left(\frac{d-b}{2}\right)}{\Gamma\left(\frac{d-b-2s}{2}\right)\Gamma\left(\frac{b}{2}\right)}
$$
and
\begin{equation}\label{Lambda}
\Lambda_{d,b,\al,\be,s}=\lambda_{d,b+\beta,s}+\lambda_{d,b+\alpha,s}-\lambda_{d,\beta,s}-\lambda_{d,\alpha,s}.
\end{equation}

Let us notice that
\begin{align}
\left(|x|^{-b}-|y|^{-b}\right)&\left(|x|^{-\al}|y|^{-\be}+|x|^{-\be}|y|^{-\al}\right)\nonumber\\
&=|x|^{-\al}\left(|x|^{-b-\be}-|y|^{-b-\be}\right)+|x|^{-\be}\left(|x|^{-b-\al}-|y|^{-b-\al}\right)\nonumber\\
&\quad-|x|^{-\al-b}\left(|x|^{-\be}-|y|^{-\be}\right)-|x|^{-\be-b}\left(|x|^{-\al}-|y|^{-\al}\right).
\label{eq:xaxb}
\end{align}
Hence, taking $b=\gamma$, multiplying both sides by $|x-y|^{-d-2s}$, integrating over $dy$ and using \eqref{modul}
(four times, for $\gamma+\beta$, $\gamma+\alpha$, $\beta$ and $\alpha$ in place of $b$;
  note that here we need the assumption that 
 $\alpha,\beta \in (0,d-2s)$),
we obtain that
\begin{align}\label{JJ}
\nonumber
& 2\cdot \text{P.V.} \int_{\R^d}\left(|x|^{-\gamma}-|y|^{-\gamma}\right)\frac{|x|^{-\al}|y|^{-\be}+|x|^{-\be}|y|^{-\al}} 2|x-y|^{-d-2s}\,dy\\
&=\frac{\pi^{\frac{d}{2}}|\Gamma(-s)|}{\Gamma\left(\frac{d+2s}{2}\right)}\Lambda_{d,\gamma,\al,\be,s}|x|^{-2s-\al-\be-\gamma}.
\end{align}
  Comparing the above equality with \eqref{eq:VeRd} we see that
  \[
  \mathcal{C}(d,s,2,\alpha,\beta) = \frac{\pi^{\frac{d}{2}}|\Gamma(-s)|}{\Gamma\left(\frac{d+2s}{2}\right)}\Lambda_{d,\gamma,\al,\be,s},
  \]
  and the result follows in the case $\al,\be,\al+\be\in(0,d-2s)$. For the general case we argue as follows.
  First, since $p=2$, in formula \eqref{C} we can replace $\left|1-r^{(d-\alpha-\beta-sp)/p}\right|^{p}$
    by $\left(1-r^{(d-\alpha-\beta-2s)/2}\right)^{2}$.
    After this change, we observe that both right hand sides of \eqref{C} and \eqref{Cds2} are holomorphic (as functions of two variables) with respect to $\alpha$ and $\beta$ in the domain $\{(\alpha,\beta)\in\mathbb{C}^2:\Ree(\alpha),\Ree(\beta),\Ree(\alpha+\beta)\in(-2s,d)\}$. Indeed, the integral in \eqref{C} is absolutely convergent, as we have $|r^{\al}+r^{\be}|\leq r^{\Ree(\al)}+r^{\Ree(\be)}$ and $|1-r^{z}|\sim |1-r^{\Ree z}|$, when $r\rightarrow 0^+$ or $r\rightarrow 1^{-}$. Holomorphicity then follows from Cauchy, Morera theorems and Osgood's lemma. Hence, since we have already shown that \eqref{Cds2} is valid for $\alpha,\beta,\alpha+\beta\in(0,d-2s)$, by the identity theorem for analytic functions it must be satisfied for all $\alpha,\beta,\alpha+\beta\in(-2s,d)$; see  Figure~\ref{fig:domainhol}.

  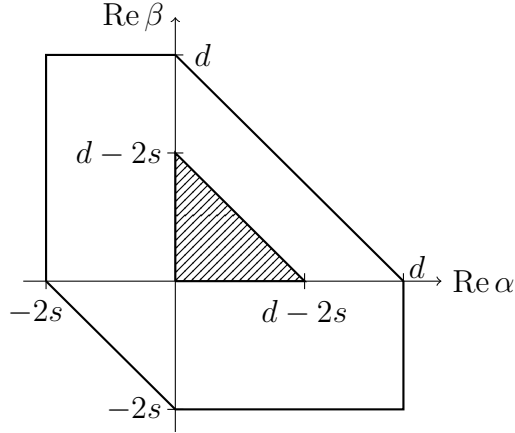
\begin{figure}
    \begin{center}
      \begin{tikzpicture}
        \draw[thin,->] (-2,0) -- (3.5,0) node[black,anchor=west] {$\Ree\alpha$};
        \draw[thin,->] (0,-2) -- (0,3.5) node[black,anchor=east] {$\Ree\beta$};
        \draw (-1.7 cm,3pt) -- (-1.7 cm,-3pt) node[anchor=north] {$-2s\ \ $};
        \draw (1.7 cm,3pt) -- (1.7 cm,-3pt) node[anchor=north] {${\textstyle{d-2s}}$};
        \draw (3.0 cm,3pt) -- (3.0 cm,-3pt) node[anchor=south west] {$\!d$};
        \draw (-3pt,3.0 cm) -- (3pt, 3.0cm) node[anchor=west] {$d$};
        \draw (-3pt,1.7cm) -- (3pt,1.7cm) node[anchor=east] {${{d-2s}}\ $};
        \draw (-3pt,-1.7 cm) -- (3pt, -1.7cm) node[anchor=east] {$-2s\ $};
        \draw[thick] (-1.7,0) -- (-1.7,3) -- (0,3) -- (3,0) -- (3, -1.7) -- (0,-1.7) -- (-1.7,0);
        \draw[thick,fill,pattern=north east lines] (0,0) -- (1.7,0) -- (0, 1.7) -- (0,0);        
  \end{tikzpicture}
  \end{center}
    \caption{
      The larger hexagonal region is
      the domain of holomorphicity, $\{(\alpha,\beta)\in\mathbb{C}^2:\Ree(\alpha),\Ree(\beta),\Ree(\alpha+\beta)\in(-2s,d)\}$, while the small triangle inside is the domain where the equality
    \eqref{Cds2}  is initially proved. See proof of Proposition~\ref{prop:C}.
    } \label{fig:domainhol}
\end{figure}

  Finally, we need to remove the assumption that $2s<d$. We may assume that $d=1$ as otherwise this assumption is always satisfied. We proceed similarly as before using the holomorphicity argument of
  both right hand sides of  \eqref{C} and \eqref{Cds2} as a~function of $s$, where $0<\Ree s<1$.
  For that we modify the right hand side of  \eqref{C} as above, and also the right hand side of \eqref{Cds2} by replacing $|\Gamma(-s)|$ by $-\Gamma(-s)$.
\end{proof}

\subsection{The case of the half-space}

In this subsection we denote
\[
\gamma=\frac{1+\alpha+\beta-sp}{p},\,w(x)=x_{d}^{-\gamma},\,k(x,y)=\frac{1}{2}|x-y|^{-d-sp}\left(x_{d}^{\alpha}y_{d}^{\beta}+x_{d}^{\beta}y_{d}^{\alpha}\right).
\]
Furthermore, we put    
\begin{equation}\label{D}
\mathcal{D}=\mathcal{D}(d,s,p,\alpha,\beta)=\frac{\pi^{\frac{d-1}{2}}\Gamma\left(\frac{1+sp}{2}\right)}{\Gamma\left(\frac{d+sp}{2}\right)}\int_{0}^{1}\frac{\left(t^{\alpha}+t^{\beta}\right)\left|1-t^{-\frac{1+\alpha+\beta-sp}{p}}\right|^{p}}{(1-t)^{1+sp}}\,dt.
\end{equation}

\begin{lem}\label{l.halfspace}
   Let $0<s<1$, $p\geq 1$ and $\alpha,\beta,\alpha+\beta\in (-1,sp)$
    and $1+\alpha+\beta\neq sp$.
 It holds
 \begin{equation}\label{eq:VePi}
 2\lim_{\varepsilon\rightarrow 0}\int_{|x_{d}-y_{d}|>\varepsilon,\,y_{d}>0}\left(w(x)-w(y)\right)|w(x)-w(y)|^{p-2}k(x,y)\,dy=\frac{\mathcal{D}(d,s,p,\alpha,\beta)}{x_{d}^{sp-\alpha-\beta}}w(x)^{p-1}
 \end{equation}
 uniformly on compacts sets contained in $\R^{d}_{+}$. The constant $\mathcal{D}(d,s,p,\al,\be)$ is given by \eqref{D}.
\end{lem}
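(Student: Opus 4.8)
The plan is to follow the scheme of Lemma~\ref{lem.w1}, taking advantage of the fact that here $w(x)=x_d^{-\gamma}$ and both boundary weights depend only on the last coordinate. Writing $x=(x',x_d)$ and $y=(y',y_d)$ with $x',y'\in\R^{d-1}$, I observe that the integrand on the left of \eqref{eq:VePi} depends on $y'$ only through the factor $|x-y|^{-d-sp}$. Since for fixed $\varepsilon>0$ the restriction $|x_d-y_d|>\varepsilon$ keeps the integral absolutely convergent, Fubini's theorem lets me integrate out $y'$ first, using the standard identity
\[
\int_{\R^{d-1}}\frac{dy'}{\bigl(|x'-y'|^2+(x_d-y_d)^2\bigr)^{\frac{d+sp}{2}}}
=\frac{\pi^{\frac{d-1}{2}}\Gamma\!\left(\frac{1+sp}{2}\right)}{\Gamma\!\left(\frac{d+sp}{2}\right)}\,|x_d-y_d|^{-1-sp}=:c_0\,|x_d-y_d|^{-1-sp},
\]
obtained from polar coordinates in $\R^{d-1}$ and the Beta-integral $\int_0^\infty u^{d-2}(1+u^2)^{-(d+sp)/2}\,du=\tfrac12\BB(\tfrac{d-1}{2},\tfrac{1+sp}{2})$. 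The decisive point is that $c_0$ is exactly the prefactor in \eqref{D}. This collapses the left side of \eqref{eq:VePi} to the one–dimensional principal value
\[
c_0\,\lim_{\varepsilon\to0}\int_{\rho>0,\ |x_d-\rho|>\varepsilon}\!\bigl(x_d^{-\gamma}-\rho^{-\gamma}\bigr)\bigl|x_d^{-\gamma}-\rho^{-\gamma}\bigr|^{p-2}\bigl(x_d^{\alpha}\rho^{\beta}+x_d^{\beta}\rho^{\alpha}\bigr)|x_d-\rho|^{-1-sp}\,d\rho .
\]

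Next I would exploit homogeneity through the substitution $\rho=x_d\sigma$ (so the cutoff becomes $|1-\sigma|>\varepsilon/x_d$). Collecting the powers of $x_d$ and using $\gamma p=1+\alpha+\beta-sp$, all of them combine into the single factor $x_d^{\gamma-1}$; one checks $\gamma-1=-(sp-\alpha-\beta)-\gamma(p-1)$, which is exactly the exponent of $x_d$ on the right of \eqref{eq:VePi}, since $w(x)^{p-1}=x_d^{-\gamma(p-1)}$. It therefore remains to evaluate the dimensionless principal value $\int_0^\infty \sgn(1-\sigma^{-\gamma})|1-\sigma^{-\gamma}|^{p-1}(\sigma^\alpha+\sigma^\beta)|1-\sigma|^{-1-sp}\,d\sigma$ and to show it equals the integral in \eqref{D}.

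To do this I would split the $\sigma$–integral at $\sigma=1$ and fold the tail $(1,\infty)$ back onto $(0,1)$ via $\sigma\mapsto1/\sigma$; denoting by $t$ the integration variable on $(0,1)$, the two contributions add up to a single integrand with common factor $\sgn(1-t^\gamma)|1-t^\gamma|^{p-1}(1-t)^{-1-sp}$. Using $1-t^{-\gamma}=-t^{-\gamma}(1-t^{\gamma})$ to rewrite the sign and power factors, the bracket simplifies by the elementary identity $t^{sp-1}(t^{-\alpha}+t^{-\beta})-t^{-\gamma(p-1)}(t^{\alpha}+t^{\beta})=t^{sp-1}(t^{-\alpha}+t^{-\beta})(1-t^{\gamma})$; together with $\sgn(1-t^\gamma)(1-t^\gamma)|1-t^\gamma|^{p-1}=|1-t^\gamma|^{p}$ and $|1-t^{-\gamma}|^{p}=t^{-\gamma p}|1-t^{\gamma}|^{p}$ (with $-\gamma p=sp-1-\alpha-\beta$), the combined integrand becomes precisely $(t^{\alpha}+t^{\beta})|1-t^{-\gamma}|^{p}(1-t)^{-1-sp}$, so that $c_0$ times its integral over $(0,1)$ is $\mathcal D$.

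The main obstacle is the analysis at $\sigma=1$. Individually, the pieces over $(0,1)$ and $(1,\infty)$ need not converge there: since $1-\sigma^{-\gamma}\sim\gamma(\sigma-1)$, the integrand behaves like $\sgn(\sigma-1)|\sigma-1|^{p-2-sp}$, which is not absolutely integrable unless $p(1-s)>1$. It is precisely the odd sign factor that makes the symmetric principal value finite, and the folding converts this conditional cancellation into the absolutely convergent integrand above, whose singularity at $t=1$ is only of order $|1-t|^{p-1-sp}$, integrable because $p(1-s)>0$. Convergence at $\sigma=0$ and $\sigma=\infty$ follows from $\alpha,\beta,\alpha+\beta\in(-1,sp)$. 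Finally, uniformity on compact subsets of $\R^d_+$ is immediate, because after scaling the cutoff is $\varepsilon/x_d$ with $x_d$ bounded away from $0$ and $\infty$, and the prefactor $x_d^{\gamma-1}$ is continuous.
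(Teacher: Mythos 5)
Your argument is correct and follows essentially the same route as the paper: integrate out $y'$ via the Beta-integral to produce exactly the prefactor $\pi^{\frac{d-1}{2}}\Gamma\bigl(\tfrac{1+sp}{2}\bigr)/\Gamma\bigl(\tfrac{d+sp}{2}\bigr)$, rescale by $x_d$, fold $(1,\infty)$ onto $(0,1)$ via $t\mapsto 1/t$ to turn the principal value into an absolutely convergent integral, and simplify with the same elementary power identity; your explicit discussion of why the individual pieces fail to converge absolutely near $t=1$ unless $p(1-s)>1$ is a welcome addition the paper leaves implicit. The only detail worth noting is the trivial $d=1$ case, where there is no $y'$ to integrate out, which the paper also handles with a one-line remark.
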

\begin{proof}
Let $d\geq 2$. We have
\begin{align*}
2\int_{|x_{d}-y_{d}|>\varepsilon,\,y_{d}>0}\left(w(x)-w(y)\right)|w(x)-w(y)|^{p-2}k(x,y)\,dy=x_{d}^{\alpha}I_{\varepsilon}(\beta)+x_{d}^\beta I_{\varepsilon}(\alpha),     
\end{align*}
where 
$$
I_{\varepsilon}(a)=\int_{|x_{d}-y_{d}|>\varepsilon,\,y_{d}>0}\frac{\left(x_{d}^{-\gamma}-y_{d}^{-\gamma}\right)\left|x_{d}^{-\gamma}-y_{d}^{-\gamma}\right|^{p-2}y_{d}^{a}}{|x-y|^{d+sp}}\,dy.
$$
Let $x',y'\in\R^{d-1}$. By absolute convergence and the formula \cite[(6.2.1)]{MR1225604}, using the substitution $y'\mapsto x'+y'|x_d-y_d|,$ we may rewrite the integral $I_{\varepsilon}(a)$ for $a\in\{\alpha,\beta\}$ as 
\begin{align}
I_{\varepsilon}(a)&=\int_{|x_{d}-y_{d}|>\varepsilon,\,y_{d}>0}\left(x_{d}^{-\gamma}-y_{d}^{-\gamma}\right)\left|x_{d}^{-\gamma}-y_{d}^{-\gamma}\right|^{p-2}y_{d}^{a}\,dy_{d}\int_{\R^{d-1}}\frac{dy'}{\left(|x'-y'|^2+|x_{d}-y_{d}|^2\right)^{\frac{d+sp}{2}}}\nonumber\\
&=\frac{\pi^{\frac{d-1}{2}}\Gamma\left(\frac{1+sp}{2}\right)}{\Gamma\left(\frac{d+sp}{2}\right)}\int_{|x_{d}-y_{d}|>\varepsilon,\,y_{d}>0}\frac{\left(x_{d}^{-\gamma}-y_{d}^{-\gamma}\right)\left|x_{d}^{-\gamma}-y_{d}^{-\gamma}\right|^{p-2}y_{d}^{a}}{|x_{d}-y_{d}|^{1+sp}}\,dy_{d}\nonumber\\
&=\frac{\pi^{\frac{d-1}{2}}\Gamma\left(\frac{1+sp}{2}\right)}{\Gamma\left(\frac{d+sp}{2}\right)}x_{d}^{-\gamma(p-1)+a-sp}\int_{|1-t|>\varepsilon/x_{d},\,t>0}\frac{(1-t^{-\gamma})|1-t^{-\gamma}|^{p-2}t^{a}}{|1-t|^{1+sp}}\,dt.\label{eq.Iepsa}
\end{align}
To end the proof, it suffices to notice that
\begin{align*}
&\lim_{\varepsilon\rightarrow 0}\int_{|1-t|>\varepsilon/x_{d},\,t>0}\frac{(1-t^{-\gamma})|1-t^{-\gamma}|^{p-2}t^{a}}{|1-t|^{1+sp}}\,dt\\
  &=
 \, \lim_{\varepsilon\to 0} \left( \int_0^{1-\varepsilon} + \int_{1+\varepsilon}^\infty \right) 
\frac{(1-t^{-\gamma})|1-t^{-\gamma}|^{p-2}t^{a}}{|1-t|^{1+sp}}\,dt\\
  & =
     \lim_{\varepsilon\to 0} \left(
     \int_0^{1-\varepsilon} \frac{(1-t^{-\gamma})|1-t^{-\gamma}|^{p-2}t^{a}}{|1-t|^{1+sp}}\,dt 
     - \int_0^{\frac{1}{1+\varepsilon}} 
     \frac{(1-t^{-\gamma})|1-t^{-\gamma}|^{p-2}t^{\gamma(p-1)+sp-1-a}}{|1-t|^{1+sp}}\,dt \right)\\
&=\int_{0}^{1}\frac{\left(1-t^{-\gamma}\right)\left|1-t^{-\gamma}\right|^{p-2}\left(t^{a}-t^{\gamma(p-1)+sp-1-a}\right)}{(1-t)^{1+sp}}\,dt.
\end{align*}
Elementary calculations show that
\begin{align*}
&\left(1-t^{-\gamma}\right)\left|1-t^{-\gamma}\right|^{p-2}\left(t^{\alpha}-t^{\gamma(p-1)+sp-1-\alpha}+t^{\beta}-t^{\gamma(p-1)+sp-1-\beta}\right)\\
&=\left(t^{\alpha}+t^{\beta}\right)\left|1-t^{-\gamma}\right|^{p}.
\end{align*}
In the remaining case $d=1$
 the formula \eqref{eq.Iepsa} still holds, but its proof is much simpler. The rest of the proof
  remains unchanged and hence the result follows.
\end{proof}

\begin{prop}\label{prop:D}
For $p=2$ and $s\neq\frac{1}{2}$ the constant $\mathcal{D}$ reduces to 
\begin{align}
\mathcal{D}(d,s,2,\alpha,\beta)&=\frac{\pi^{\frac{d-1}{2}}\Gamma\left(\frac{1+2s}{2}\right)}{\Gamma\left(\frac{d+2s}{2}\right)}\Bigg[\BB(\alpha+1,-2s)+\BB(\beta+1,-2s)+\BB(2s-\alpha,-2s) \nonumber\\
&+\BB(2s-\beta,-2s)-2\BB\left(s+\frac{\alpha-\beta+1}{2},-2s\right)-2\BB\left(s+\frac{\beta-\alpha+1}{2},-2s\right)\Bigg], \label{eq:Dp2}
\end{align}
with $\BB$ being the Beta function.
In the case $p=2$ and $s=\frac{1}{2}$ we have
\[
\mathcal{D}(d,\tfrac{1}{2},2,\alpha,\beta)=\frac{\pi^{\frac{d+1}{2}}}{\Gamma\left(\frac{d+1}{2}\right)}\left((\alpha-\beta)\cot\frac{\pi(\alpha-\beta)}{2}-\alpha\cot\pi\alpha-\beta\cot\pi\beta\right),
\]
with the convention $0\cot0=1$.  For example, $\mathcal{D}(d,\tfrac{1}{2},2,0,0)=0$.
\end{prop}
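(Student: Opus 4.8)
The plan is to substitute $p=2$ directly into the integral defining $\mathcal{D}$ in \eqref{D} and to recognize the resulting integral as a combination of Beta functions. With $p=2$ we have $\gamma=\frac{1+\al+\be-2s}{2}$, the weight $\left|1-t^{-\gamma}\right|^{2}=(1-t^{-\gamma})^{2}$, and after expanding $(1-t^{-\gamma})^{2}=1-2t^{-\gamma}+t^{-2\gamma}$ and multiplying by $(t^{\al}+t^{\be})$ the integrand splits into six monomials $t^{c}(1-t)^{-1-2s}$. A short computation using $2\gamma=1+\al+\be-2s$ yields the exponents $c\in\{\al,\be,2s-\al-1,2s-\be-1,s+\tfrac{\al-\be-1}{2},s+\tfrac{\be-\al-1}{2}\}$. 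First I would record the formal identity $\int_{0}^{1}t^{c}(1-t)^{-1-2s}\,dt=\BB(c+1,-2s)$, which, read off term by term, reproduces exactly the six summands of \eqref{eq:Dp2}.

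The subtlety is that for $0<s<1$ each of these six Beta integrals diverges at $t=1$ (the exponent $-1-2s<-1$), and only their sum is integrable; indeed one checks that the bracketed combination of monomials vanishes to second order at $t=1$, matching the integrability of \eqref{D} established in Lemma~\ref{l.halfspace}. To justify the term-by-term evaluation I would argue by analytic continuation exactly as in the proof of Proposition~\ref{prop:C}. On the open set where $-\tfrac12<\Ree s<0$ and $\Ree\al,\Ree\be\in(-1,\Ree(2s))$ with $|\Ree(\al-\be)|<1+2\Ree s$, every exponent satisfies $\Ree(c+1)>0$ while $\Ree(-1-2s)>-1$, so all six integrals converge separately and \eqref{eq:Dp2} holds literally by linearity. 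Both the prefactor times the integral \eqref{D} and the right-hand side of \eqref{eq:Dp2} are holomorphic in $(s,\al,\be)$ on a connected domain that avoids the polar sets $s\in\{0,\tfrac12\}$ and joins this auxiliary region to the target real points; the identity theorem (via Osgood's lemma, as before) then delivers \eqref{eq:Dp2} for all $0<s<1$, $s\neq\tfrac12$.

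For the remaining value $s=\tfrac12$ I would pass to the limit $s\to\tfrac12$ in \eqref{eq:Dp2}. Since $\mathcal{D}(d,s,2,\al,\be)$ is continuous in $s$ near $\tfrac12$ (the integral \eqref{D} converges there), the limit of the right-hand side exists even though each $\BB(a,-2s)$ has a pole at $s=\tfrac12$, where $-2s=-1$. Writing $\delta=1-2s$ and Euler's constant $\gamma_E$, and using the Laurent expansion $\BB(a,\delta-1)=-\frac{a-1}{\delta}+(a-1)\bigl(\psi(a-1)+\gamma_E-1\bigr)+O(\delta)$, I would first verify that the polar ($1/\delta$) parts of the six terms cancel, and then collect the finite parts; the $\gamma_E$ contributions and all non-$\psi$ contributions cancel as well, leaving a combination of digamma differences $\psi(x)-\psi(-x)$. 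Applying the reflection identity $\psi(x)-\psi(-x)=-\pi\cot(\pi x)-\tfrac1x$ converts these into cotangents and produces exactly $\frac{\pi^{(d+1)/2}}{\Gamma\left(\frac{d+1}{2}\right)}\bigl((\al-\be)\cot\tfrac{\pi(\al-\be)}{2}-\al\cot\pi\al-\be\cot\pi\be\bigr)$, with the stated convention at coincident arguments; the example $\mathcal{D}(d,\tfrac12,2,0,0)=0$ then follows from the limits $u\cot\tfrac{\pi u}{2}\to\tfrac{2}{\pi}$ and $u\cot\pi u\to\tfrac1\pi$.

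The main obstacle I expect is the bookkeeping in the $s=\tfrac12$ limit: one must track both the $\delta$-dependence of the Beta arguments (several of which, such as $2s-\al$ and $s+\tfrac{\al-\be+1}{2}$, move with $s$) and the finite parts of the Gamma and digamma expansions carefully enough to see every spurious term cancel. The generic-$s$ part is routine once the holomorphicity domain is set up, its only delicate point being to exhibit a connected domain linking the auxiliary region $\Ree s<0$ to the target while avoiding the poles at $s\in\{0,\tfrac12\}$.
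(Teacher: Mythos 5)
Your proof is correct, but it takes a genuinely different route from the paper for the main identity \eqref{eq:Dp2}. You work directly with the one-dimensional integral in \eqref{D}: expanding $(t^{\alpha}+t^{\beta})(1-t^{-\gamma})^{2}$ into six monomials $t^{c}(1-t)^{-1-2s}$ (your exponents check out) and justifying the term-by-term Beta evaluation by analytic continuation to a region $\Ree s<0$ where each of the six integrals converges separately, then invoking the identity theorem exactly as the paper does for Proposition~\ref{prop:C}. The paper instead never touches the integral \eqref{D} at this stage: it recomputes the left-hand side of \eqref{eq:VePi} via the known action of the regional fractional Laplacian on powers $x_d^c$ in the half-space (\cite{BBC}, quoted as $Lx_d^c=\fgamm(2s,c)\cdot(\mathrm{const})\cdot x_d^{c-2s}$), splits the kernel by the algebraic identity analogous to \eqref{eq:xaxb}, and then converts the resulting combination of $\fgamm$'s to Beta functions using the ready-made identity \eqref{eq:fgamm} from \cite{MR2663757}, in which the divergence cancellation you handle by continuation is already packaged (note the $1/a$ terms of \eqref{eq:fgamm} cancel in the signed sum). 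Your approach is more self-contained and elementary, at the cost of setting up the connected holomorphicity domain avoiding the polar hyperplanes $s\in\{0,\tfrac12\}$; the paper's approach outsources the hard analysis to two citations and keeps the link to the censored stable process explicit. For the $s=\tfrac12$ case the two arguments are essentially the same computation in different clothing: the paper factors out $\Gamma(-2s)$ and applies L'H\^opital, while you expand each Beta function in a Laurent series in $\delta=1-2s$; both reduce to the digamma functional equation and reflection formula, and your verification that the polar parts cancel (the signed sum of the $a_0-1$'s vanishes) is the same cancellation that makes the paper's bracket vanish at $s=\tfrac12$.
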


In particular, when $\al=\be=0$, we obtain the value
\[
\mathcal{D}(d,s,2,0,0)=\frac{\pi^{\frac{d}{2}}\Gamma\left(\frac{1+2s}{2}\right)}{\Gamma\left(\frac{d+2s}{2}\right)}\frac{\BB\left(\frac{1+2s}{2},1-s\right)-4^{s}}{s4^{s}},
\]
established by Bogdan and Dyda in \cite{MR2663757}.

\begin{proof}[Proof of Proposition~\ref{prop:D}]
  We are going to prove the result by calculating the left hand side of \eqref{eq:VePi}
  in another way.
  First, we recall from
   \cite[(5.4) and (5.5)]{BBC} that
   \[
   Lx_d^{c}=\fgamm(2s,c)\frac{\pi^{\frac{d-1}{2}}\Gamma\left(\frac{1+2s}{2}\right)}{\Gamma\left(\frac{d+2s}{2}\right)}x_d^{c-2s}
   \]
   for $c\in(-1,2s)$, where $L=\mathcal{A}_{d,-2s}^{-1}\Delta_{\R^d_+}^{s}$ and
\begin{equation}\label{gamma}
\fgamm(a,b)=\int_{0}^{1}\frac{(t^{b}-1)(1-t^{a-b-1})}{(1-t)^{1+a}}\,dt,\quad a\in(0,2),\,b\in(-1,a).
\end{equation}
Morever, by \cite[(2.2)]{MR2663757} we have
\begin{align}\label{eq:fgamm}
  \fgamm(a,b)=\BB(b+1,-a)+\BB(a-b,-a)+\frac{1}{a}. 
\end{align}
Using an elementary equality like \eqref{eq:xaxb}, but with $x_d$ and $y_d$ in place of $|x|$ and $|y|$, respectively, we can proceed like in \eqref{JJ} to obtain that
\begin{align}\label{KK}
\nonumber
&\text{P.V.}\int_{\R^d_+}\left(x_d^{b}-y_d^{b}\right)\left(x_d^{\al}y_d^{\be}+x_d^{\be}y_d^{\al}\right)|x-y|^{-d-2s}\,dy\\
&=\frac{\pi^{\frac{d-1}{2}}\Gamma\left(\frac{1+2s}{2}\right)}{\Gamma\left(\frac{d+2s}{2}\right)}\left(\fgamm(2s,\al)+\fgamm(2s,\be)-\fgamm(2s,\al+b)-\fgamm(2s,\be+b)\right)x_d^{-2s+b+\al+\be},
\end{align}
 where $b= - \gamma$ and $\al,\be,\al+\be\in(-1,2s)$.
 Comparing \eqref{KK} with \eqref{eq:VePi} we conclude that
\begin{align*}
\mathcal{D}(d,s,2,\al,\be)&=\frac{\pi^{\frac{d-1}{2}}\Gamma\left(\frac{1+2s}{2}\right)}{\Gamma\left(\frac{d+2s}{2}\right)}\Bigg[\fgamm(2s,\al)+\fgamm(2s,\be)-\fgamm\left(2s,s+\frac{\al-\be+1}{2}\right)\\
&-\fgamm\left(2s,s+\frac{\be-\al+1}{2}\right)\Bigg],
\end{align*}
therefore \eqref{eq:Dp2} follows by an application of \eqref{eq:fgamm}.

The limit case $\lim_{s\rightarrow\frac{1}{2}}\mathcal{D}(d,s,2,\alpha,\beta)$ can be obtained as follows. We have 
\begin{align*}
\mathcal{D}(d,s,2,\alpha,\beta)&=\frac{\pi^{\frac{d-1}{2}}\Gamma\left(\frac{1+2s}{2}\right)\Gamma(-2s)}{\Gamma\left(\frac{d+2s}{2}\right)}\Bigg[\frac{\Gamma(\alpha+1)}{\Gamma(\alpha+1-2s)}+\frac{\Gamma(\beta+1)}{\Gamma(\beta+1-2s)}+\frac{\Gamma(2s-\alpha)}{\Gamma(-\alpha)}\\
&+\frac{\Gamma(2s-\beta)}{\Gamma(-\beta)}-2\frac{\Gamma\left(s+\frac{\alpha-\beta+1}{2}\right)}{\Gamma\left(-s+\frac{\alpha-\beta+1}{2}\right)}-2\frac{\Gamma\left(s+\frac{\beta-\alpha+1}{2}\right)}{\Gamma\left(-s+\frac{\beta-\alpha+1}{2}\right)}\Bigg].
\end{align*} 
The derivative of the expression in the square brackets above with respect to $s$ at $s=\frac{1}{2}$ is given by
\begin{align*}
&2\Bigg[\alpha\psi(\alpha)-\alpha\psi(1-\alpha)+\beta\psi(\beta)-\beta\psi(1-\beta)\\
&-\frac{\alpha-\beta}{2}\left(\psi\left(1+\frac{\alpha-\beta}{2}\right)+\psi\left(\frac{\alpha-\beta}{2}\right)\right)\\
&-\frac{\beta-\alpha}{2}\left(\psi\left(1+\frac{\beta-\alpha}{2}\right)+\psi\left(\frac{\beta-\alpha}{2}\right)\right)\Bigg],
\end{align*}
where $\psi=\Gamma'/\Gamma$ is the digamma function. Since $\Gamma(-2s)=\frac{\Gamma(2-2s)}{-2s(1-2s)}$, the result follows from L'Hôpital's rule, the functional equation $\psi(1+z)=\psi(z)+1/z$ and the reflection formula $\psi(1-z)-\psi(z)=\pi\cot \pi z$.
\end{proof}

\subsection{Limiting cases when $s\to 0^+$}
\begin{prop}
It holds
\begin{equation}\label{C0}
\mathcal{C}(d,0,p,\al,\be)=
\frac{2\pi^{\frac{d}{2}}}{\Gamma\left(\frac{d}{2}\right)}\int_{0}^{1}\frac{\left(r^{\al-1}+r^{\be-1}\right)\left(1-r^{\frac{d-\al-\be}{p}}\right)^p}{1-r^2}\,dr.
\end{equation}

When $p=2$, one has
\begin{align*}
\mathcal{C}(d,0,2,\al,\be)&=\frac{\pi^{\frac{d}{2}}}{\Gamma\left(\frac{d}{2}\right)}\Bigg[2\psi\left(\frac{d-\al+\be}{4}\right)+2\psi\left(\frac{d+\al-\be}{4}\right)\\
&-\psi\left(\frac{\al}{2}\right)-\psi\left(\frac{\be}{2}\right)-\psi\left(\frac{d-\al}{2}\right)-\psi\left(\frac{d-\be}{2}\right)\Bigg]
\end{align*}
and
\begin{align*}
\mathcal{D}(d,0,2,\al,\be)&=\frac{\pi^{\frac{d}{2}}}{\Gamma\left(\frac{d}{2}\right)}\Bigg[2\psi\left(\frac{\al-\be+1}{2}\right)+2\psi\left(\frac{\be-\al+1}{2}\right)\\
&-\psi(\al+1)-\psi(-\al)-\psi(\be+1)-\psi(-\be)\Bigg],
\end{align*}
with $\psi$ being the digamma function.
\end{prop}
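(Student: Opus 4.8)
The plan is to dispatch the three formulas in turn, reducing every one to a single elementary integral evaluated through the Gauss digamma representation
\[
\int_0^1\frac{t^{a-1}-t^{a'-1}}{1-t}\,dt=\psi(a')-\psi(a),\qquad a,a'>0 .
\]
For \eqref{C0} I would simply put $s=0$ in \eqref{C}; the only nontrivial input is $\Phi_{d,0,p}$. Using the hypergeometric form \eqref{PhiFF} together with the degenerate Gauss identity $\FF(a,b;a;z)=(1-z)^{-b}$ with $a=\tfrac d2$, $b=1$, $z=r^2$, I obtain
\[
\Phi_{d,0,p}(r)=\left|\mathbb{S}^{d-1}\right|\FF\!\left(\tfrac d2,1;\tfrac d2;r^2\right)=\frac{\left|\mathbb{S}^{d-1}\right|}{1-r^2}=\frac{2\pi^{d/2}}{\Gamma(d/2)}\,\frac{1}{1-r^2};
\]
the case $d=1$ is checked straight from \eqref{Phi}, where $\Phi_{1,0,p}(r)=(1-r)^{-1}+(1+r)^{-1}=2/(1-r^2)$ matches the same expression. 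Substituting this and $r^{sp-1}=r^{-1}$ into \eqref{C}, and using $r^{-1}(r^{\al}+r^{\be})=r^{\al-1}+r^{\be-1}$ together with $|1-r^{(d-\al-\be)/p}|=1-r^{(d-\al-\be)/p}$ for $r\in(0,1)$ (valid since $\al+\be<d$), yields \eqref{C0}; convergence at $0$ uses $\al,\be>0$ and at $1$ the factor $(1-r^{(d-\al-\be)/p})^p\sim(1-r)^p$.

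Next, for $\mathcal{C}(d,0,2,\al,\be)$ I would set $p=2$ in \eqref{C0} and expand $(1-r^{(d-\al-\be)/2})^2=1-2r^{(d-\al-\be)/2}+r^{d-\al-\be}$. Multiplying by $r^{\al-1}+r^{\be-1}$ produces six powers $r^{c-1}$ whose coefficients $1,1,1,1,-2,-2$ sum to zero. The substitution $u=r^2$ gives $\int_0^1 r^{c-1}(1-r^2)^{-1}\,dr=\tfrac12\int_0^1 u^{c/2-1}(1-u)^{-1}\,du$, and since the coefficients sum to zero I may replace each integrand by its difference against a common reference power, apply the displayed digamma formula, and collect. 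This produces precisely the six digamma terms with arguments $\tfrac\al2,\tfrac\be2,\tfrac{d-\al}2,\tfrac{d-\be}2$ and $\tfrac{d\pm(\al-\be)}4$, which is the stated identity.

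Finally, for $\mathcal{D}(d,0,2,\al,\be)$ I would set $s=0$, $p=2$ in \eqref{D}: the prefactor collapses to $\frac{\pi^{(d-1)/2}\Gamma(1/2)}{\Gamma(d/2)}=\frac{\pi^{d/2}}{\Gamma(d/2)}$ and $\gamma=\tfrac{1+\al+\be}{2}$. Expanding $(1-t^{-\gamma})^2(t^{\al}+t^{\be})$ again yields six powers of $t$ with coefficients summing to zero, now against the kernel $(1-t)^{-1}$, so the digamma formula applies directly (no substitution needed), giving digamma values at $\al+1,\be+1,-\al,-\be,\tfrac{\al-\be+1}2,\tfrac{\be-\al+1}2$. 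Alternatively one may pass to the limit $s\to0^+$ in the closed form \eqref{eq:Dp2}, using $\BB(x,-2s)=-\tfrac1{2s}-\psi(x)-\gamma_E+O(s)$: because the six coefficients sum to zero, the singular and constant parts cancel and only $-\sum_j n_j\psi(x_j)$ survives, reproducing the same answer.

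The main obstacle is purely the regularization and bookkeeping. Each individual power integral $\int_0^1 t^{c}(1-t)^{-1}\,dt$ diverges logarithmically at $t=1$, so the computation is legitimate only after the six terms are grouped into convergent differences, which is permitted precisely because the coefficients sum to zero. One must also confirm endpoint integrability under the relevant ranges---$\al,\be>0$ for $\mathcal{C}$ (to control $t\to0$) and $\al,\be\in(-1,0)$ for $\mathcal{D}$ (so that $-1-\al,-1-\be>-1$)---but once the differences are formed, the application of the digamma formula and the collection of terms are routine.
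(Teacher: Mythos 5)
Your proposal is correct and follows essentially the same route as the paper: formula \eqref{C0} is obtained exactly as in the paper's proof, by setting $s=0$ in \eqref{C} and using the degenerate Gauss identity $\FF(\tfrac d2,1;\tfrac d2;r^2)=(1-r^2)^{-1}$ in \eqref{PhiFF}, while the two digamma formulas are the ``basic calculations'' the paper omits, which you carry out correctly (expanding into six power terms with coefficients summing to zero and applying the Gauss digamma integral). Your explicit check of the case $d=1$ and the alternative derivation of $\mathcal{D}(d,0,2,\al,\be)$ from \eqref{eq:Dp2} via the expansion of $\BB(x,-2s)$ are welcome additions but do not change the method.
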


\begin{proof}
  The formula \eqref{C0} follows from \eqref{C}, \eqref{Phi} and \eqref{PhiFF},
  by noting that for $d\geq 2$ it holds
  $\Phi_{d,0,p}(r) = \left|\mathbb{S}^{d-1}\right|  \FF\left(\frac{d}{2},1;\frac{d}{2};r^2\right) = \left|\mathbb{S}^{d-1}\right| (1-r^2)^{-1}$.
  
  The proofs of the remaining equalities follow directly from previous results, form of the constants $\mathcal{C}(d,s,p,\al,\be)$, $\mathcal{D}(d,s,p,\al,\be)$ and basic calculations and will be omitted.
\end{proof}

\section{Proof of the weighted fractional Hardy inequality on $\R^d$}

\begin{proof}[Proof of Theorem \ref{tw1}]
We substitute $u(x)=v(x)w(x)$ and use the inequality \eqref{generalhardy} and Lemma \ref{lem.w1} to conclude that for $u\in C_c(\R^d)$ we have the inequality 
\begin{align*}
\int_{\R^d}\int_{\R^d}\frac{|u(x)-u(y)|^{p}}{|x-y|^{d+sp}}|x|^{-\al}|y|^{-\be}\,dy\,dx&\geq\mathcal{C}\int_{\R^d}\frac{|u(x)|^{p}}{|x|^{sp+\al+\be}}\,dx,\\
\end{align*}
where $\mathcal{C}$ is given by \eqref{C}. Hence, \eqref{hardy1} holds. It suffices to show that the constant $\mathcal{C}$ is optimal. This fact is possible to obtain by slightly modifying the proof of the sharpness from \cite{MR2469027} and also \cite[Theorem 2.9]{MR3626031}, when $\al=\be$, and we omit the details.
\end{proof}

\begin{proof}[Proof of Theorem \ref{tw2}]
This is a straightforward consequence of Lemma \ref{lem.w1} and \eqref{generalhardyremainder}.
\end{proof}

\section{Proof of the weighted fractional Hardy inequality on $\R^{d}_{+}$}\label{Section.halfspace}

\begin{proof}[Proof of Theorem~\ref{tw3}]
Let $u\in C_{c}(\R^{d}_{+})$. We substitute $u=wv$, where $w(x)=x_{d}^{-\gamma}$. By Lemma \ref{l.halfspace} and \cite[Proposition 2.2]{MR2469027} we have
\begin{align*}
\int_{\R^{d}_{+}}\int_{\R^{d}_{+}}\frac{|u(x)-u(y)|^{p}}{|x-y|^{d+sp}}\,x_{d}^{\alpha}\,y_{d}^{\beta}\,dy\,dx&\geq\mathcal{D}(d,s,p,\alpha,\beta)\int_{\R^{d}_{+}}\frac{|u(x)|^{p}}{x_{d}^{sp-\alpha-\beta}}\,dx
\end{align*}
where $\mathcal{D}(d,s,p,\alpha,\beta)$ is given by (\ref{D}). Hence, it suffices to show that the constant $\mathcal{D}(d,s,p,\alpha,\beta)$ is optimal. To do this we first assume that $d=1$. Then the optimality is easy to achieve by a slight modification of the proof for the whole space $\R$, with the same approximating functions; we refer again to \cite{MR2469027} and \cite{MR3626031}. For greater dimensions $d\geq 2$ we allege the argument of Frank and Seiringer from \cite{MR2723817}. Namely, for $x'\in\R^{d-1}$ we define the functions $u_n(x)=\chi_n(x')\varphi(x_d)$, where
$$
\chi_n(x')=\begin{cases}
 1 ~~~~~~~~~~~~~~~~~\text{if\,} |x'|\leq n,\\
 n+1-|x'|~~~\text{\,if\,}n<|x'|<n+1,\\
 0~~~~~~~~~~~~~~~~~\text{if\,}|x'|\geq n+1.
\end{cases}
$$
We have 
\begin{align*}
\lim_{n\rightarrow\infty}\frac{\displaystyle\int_{\R^d_+}\int_{\R^d_+}\frac{|u_n(x)-u_n(y)|^{p}}{|x-y|^{d+sp}}\,x_d^{\al}\,y_d^{\be}\,dy\,dx}{\displaystyle\int_{\R^d_+}\frac{|u_n(x)|^p}{x_d^{sp-\al-\be}}\,dx}=A\frac{\displaystyle\int_{0}^{\infty}\int_{0}^{\infty}\frac{|\varphi(x_d)-\varphi(y_d)|^p}{|x_d-y_d|^{1+sp}}\,x_d^{\al}\,y_d^{\be}\,dx_d\,dy_d}{\displaystyle\int_{0}^{\infty}\frac{|\varphi(x_d)|^p}{x_d^{sp-\al-\be}}\,dx_d},    
\end{align*}
where $A=\frac{\mathcal{D}(d,s,p,\al,\be)}{\mathcal{D}(1,s,p,\al,\be)}$. Hence, sharpness of the constant $\mathcal{D}(d,s,p,\al,\be)$ for $d\geq 2$ follows from the sharpness of the constant $\mathcal{D}(1,s,p,\al,\be)$ for $d=1$.
\end{proof}
\begin{proof}[Proof of Theorem \ref{tw4}]
Follows directly from \eqref{generalhardyremainder} and Lemma \ref{l.halfspace}. 
\end{proof}
\begin{proof}[Proof of Theorem \ref{tw5}]
According to Theorem \ref{tw4}, it suffices to show that the remainder term dominates with the constant the integral $\left(\int_{\R^d_+}|u(x)|^{q}x_{d}^{\frac{q}{p}(\al+\be)}\,dx\right)^{\frac{p}{q}}$. To do this, we modify the proof of the unweighted Hardy--Sobolev--Maz'ya inequality from \cite[Section 2]{MR2910984}. 

We have 
\[
x_d^{-\frac{1-\al+\be-sp}{2}}y_d^{-\frac{1+\al-\be-sp}{2}}=\left(x_d y_d\right)^{\frac{sp-1}{2}}\left(\frac{x_d}{y_d}\right)^{\frac{\al-\be}{2}}.
\]
Without loss of generality, we may and do assume that $\al\geq\be$. Hence, recalling that $v(x)=x_d^{\frac{1+\al+\be-sp}{p}}u(x)$, we make the estimation
\begin{align*}
&\int_{\R^{d}_{+}}\int_{\R^{d}_{+}}\frac{|v(x)-v(y)|^{p}}{|x-y|^{d+sp}}x_{d}^{-\frac{1-\alpha+\beta-sp}{2}}y_{d}^{-\frac{1+\alpha-\beta-sp}{2}}\,dy\,dx\\
&\geq \iint_{\{x_d> y_d\}}\frac{|v(x)-v(y)|^{p}}{|x-y|^{d+sp}}\left(x_d y_d\right)^{\frac{sp-1}{2}}\left(\frac{x_d}{y_d}\right)^{\frac{\al-\be}{2}}\,dy\,dx\\
&\geq \iint_{\{x_d> y_d\}}\frac{|v(x)-v(y)|^{p}}{|x-y|^{d+sp}}\left(x_d y_d\right)^{\frac{sp-1}{2}}\,dy\,dx\\
&=\frac{1}{2}\int_{\R^{d}_{+}}\int_{\R^{d}_{+}}\frac{|v(x)-v(y)|^{p}}{|x-y|^{d+sp}}\left(x_d y_d\right)^{\frac{sp-1}{2}}\,dy\,dx,
\end{align*}
where in the last passage we used the symmetry of the integrand. Now, we repeat the computations from \cite{MR2910984} to conclude that the last term is an upper bound for $\left(\int_{\R^d_+}|u(x)|^{q}x_{d}^{\frac{q}{p}(\al+\be)}\,dx\right)^{\frac{p}{q}}$. The proof is complete.
\end{proof}

The next result is the first step in proving the weighted fractional Hardy inequality for general domains, see an unweighted analogue \cite[Theorem 2.5]{LossSloane}.
\begin{lem}\label{l.open}
Let $J\subset\R$ be a nonempty open set, $\al,\be,\al+\be\in(-1,sp)$ and $sp>1+\al+\be$. Then for all $u\in C_c(J)$,
\begin{equation}\label{J}
\int_{J}\int_{J}\frac{|u(x)-u(y)|^{p}}{|x-y|^{1+sp}}d_J(x)^{\al} d_J(y)^{\be}\,dy\,dx\geq\mathcal{D}(1,s,p,\al,\be)\int_{J}\frac{|u(x)|^{p}}{d_J(x)^{sp-\al-\be}}\,dx,   
\end{equation}
where $d_J(x)=\dist(x,\partial J)$.
\end{lem}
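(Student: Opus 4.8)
The plan is to follow the approach of Loss and Sloane for the unweighted case, reducing \eqref{J} on a general open set to a single interval and then comparing that interval with a half-line. First I would decompose $J=\bigsqcup_k I_k$ into its (at most countably many) connected components, each an open interval. Since $u\in C_c(J)$ has compact support, it meets only finitely many $I_k$, and $u=\sum_k u_k$ with $u_k=u\,\mathbf{1}_{I_k}\in C_c(I_k)$. The crucial elementary fact is that for $x\in I_k$ the nearest point of $\partial J$ is an endpoint of $I_k$, so $d_J(x)=d_{I_k}(x)$ there; hence the right-hand side of \eqref{J} equals $\sum_k\mathcal{D}(1,s,p,\al,\be)\int_{I_k}|u_k|^p d_{I_k}^{-(sp-\al-\be)}$. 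Splitting the double integral on the left over the blocks $I_j\times I_k$ (legitimate by Tonelli, the integrand being nonnegative), the diagonal blocks $j=k$ reproduce exactly the single-interval left-hand sides, while each off-diagonal block $j\neq k$ has a nonnegative integrand and may simply be dropped. Consequently it suffices to prove \eqref{J}, with the same constant $\mathcal{D}(1,s,p,\al,\be)$, for a single interval $I$ and every $v\in C_c(I)$.

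When $I$ is a half-line, after a translation it becomes $(0,\infty)$ and the claim is precisely the one-dimensional case of Theorem~\ref{tw3}: applying the general Hardy inequality \eqref{generalhardy} with $w(x)=x^{-\gamma}$, $\gamma=\frac{1+\al+\be-sp}{p}$, together with Lemma~\ref{l.halfspace} for $d=1$, produces exactly the constant $\mathcal{D}(1,s,p,\al,\be)$. When $I$ is bounded, a translation and a dilation, under which both sides of \eqref{J} transform by the same power of the scaling factor so that the sharp constant is unchanged, normalize $I$, and the task becomes to establish the inequality there with constant at least the half-line constant.

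For the bounded interval I would again use the ground-state representation, applying \eqref{generalhardy} on $I$ with $w=d_I^{-\gamma}$; this reduces everything to the pointwise estimate $V_I(x)\geq\mathcal{D}(1,s,p,\al,\be)\,d_I(x)^{-(sp-\al-\be)}$ for the potential \eqref{eq:plaplace}. I would obtain it by comparison with the half-line $H$ whose single boundary point is the endpoint of $I$ nearest to $x$. Here $I\subset H$ and $d_I(y)\leq d_H(y)$ for $y\in I$, while $d_I(x)=d_H(x)$, so by Lemma~\ref{l.halfspace} the half-line potential $V_H(x)$ already equals the target constant. The difference $V_I(x)-V_H(x)$ then splits into a tail contribution over $H\setminus I$, which is of favorable sign because there $w_H(y)>w_H(x)$, and a remaining integral over $I$ comparing the two kernels and the two differences $w(x)-w(y)$. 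The main obstacle is precisely this last piece: since $\gamma<0$ makes $w=d^{-\gamma}$ increasing in the distance whereas the kernel carries the possibly negative exponents $\al,\be$, the integrand fails to dominate pointwise, and one must exploit the principal-value cancellation together with the monotonicity of $t\mapsto t|t|^{p-2}$ to control it. I expect this comparison of principal-value integrals, rather than the structural reduction to intervals, to be the heart of the argument.
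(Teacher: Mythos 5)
Your structural reductions (decomposition of $J$ into components, dropping off-diagonal blocks, scaling to normalize the interval, and settling the half-line case via Lemma~\ref{l.halfspace} with $d=1$) are all correct and consistent with the paper. The genuine gap is the bounded-interval case, which you yourself flag as ``the heart of the argument'' but do not carry out: you propose to run the ground-state representation on $I=(0,1)$ with the \emph{two-sided} weight $w=d_I^{-\gamma}=\min\{x,1-x\}^{-\gamma}$ and to prove the pointwise bound $V_I(x)\geq\mathcal{D}(1,s,p,\al,\be)\,d_I(x)^{\al+\be-sp}$ by comparison with the half-line potential. This comparison does not reduce to a sign argument: on the far half of the interval the replacement of $y^{-\gamma}$ by $(1-y)^{-\gamma}$ changes the sign pattern of $\left(w(x)-w(y)\right)|w(x)-w(y)|^{p-2}$, and simultaneously the kernel weight $d_I(y)^{\be}$ versus $y^{\be}$ moves in the unfavorable direction when $\be<0$, so the two discrepancies cannot be absorbed term by term. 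Without an actual estimate here the proof is incomplete.

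The paper sidesteps this difficulty with a different choice of ground state. It keeps the \emph{one-sided} weight and ground state on $(0,1)$, namely $k(x,y)=\frac{1}{2}|x-y|^{-1-sp}(x^{\al}y^{\be}+x^{\be}y^{\al})$ and $w(x)=x^{-\gamma}$, for which $V(x)$ differs from the half-line potential only by subtracting the integral over $(1,\infty)$; since $w$ is increasing (as $\gamma<0$), that subtracted term is nonpositive and the bound $V(x)\geq\mathcal{D}(1,s,p,\al,\be)x^{\al+\be-sp}$ is immediate. Crucially, the resulting one-sided inequality \eqref{01} holds for all bounded $v$ with $\supp v\subset(0,1]$, i.e.\ $v$ need not vanish at the right endpoint. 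The two-sided inequality for $d_J=\min\{x,1-x\}$ is then obtained by splitting $(0,1)$ at $1/2$, discarding the cross terms, applying \eqref{01} (rescaled to $(0,1/2)$) to $u$ restricted to each half, and using the reflection $x\mapsto 1-x$. If you want to salvage your plan, you should replace your pointwise comparison of $V_I$ with $V_H$ by this one-sided-weight-plus-midpoint-splitting device; as written, the key analytic step is asserted rather than proved.
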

\begin{proof}
We first prove the inequality \eqref{J} for $J=(0,1)$. The following proof is a modification of \cite[Proof of Theorem 2.5]{LossSloane}. For $x\in(0,1)$ and $\varepsilon>0$ we define 
$$
V_{\varepsilon}(x)=2w(x)^{-p+1}\left(\int_{0}^{x-\varepsilon}+\int_{x+\varepsilon}^{1}\right)\left(w(x)-w(y)\right)\left|w(x)-w(y)\right|^{p-2}k(x,y)\,dy,
$$
recalling that $k(x,y)=\frac{1}{2}|x-y|^{-1-\al}\left(x^{\al}y^{\be}+x^{\be}y^{\al}\right)$ and $w(x)=x^{-\frac{1+\al+\be-sp}{p}}$. We observe that 
\begin{align*}
 V(x)&:=\lim_{\varepsilon\rightarrow 0^+}V_{\varepsilon}(x)\\
&=2w(x)^{-p+1}\text{P.V.}\left(\int_{0}^{\infty}-\int_{1}^{\infty}\right)\left(w(x)-w(y)\right)\left|w(x)-w(y)\right|^{p-2}k(x,y)\,dy\\
    &\geq2w(x)^{-p+1}\text{P.V.}\int_{0}^{\infty}\left(w(x)-w(y)\right)\left|w(x)-w(y)\right|^{p-2}k(x,y)\,dy\\
    &=\mathcal{D}(1,s,p,\al,\be)x^{-sp+\al+\be},
\end{align*}
by Lemma \ref{l.halfspace}, since $w(x)\leq w(y)$ for $y\in[1,\infty)$. In addition, by the uniform convergence, for any compact $K\subset (0,1]$ and small $\varepsilon$, $V_{\varepsilon}(x)>0$ uniformly for all $x\in K$. Hence, by \cite[(2.17)]{MR2469027} with $(0,1)$ instead of $\R^d$ and Fatou's lemma,
\begin{equation}\label{01}
  \int_{0}^{1}\int_{0}^{1}\frac{|v(x)-v(y)|^{p}}{|x-y|^{1+sp}}x^{\al}y^{\be}\,dy\,dx\geq\mathcal{D}(1,s,p,\al,\be)\int_{0}^{1}\frac{|v(x)|^{p}}{x^{sp-\al-\be}}\,dx,
\end{equation}
for any bounded function $v$ for with $\supp v\subset (0,1]$. 
By scaling, an analogous inequality is satisfied for any interval $(a,b)$. Moreover, using \eqref{01}, we have for $u\in C_c(J)$,
\begin{align*}
&\int_{0}^{1}\int_{0}^{1}\frac{|u(x)-u(y)|^{p}}{|x-y|^{1+sp}}\min\{x,1-x\}^{\al}\min\{y,1-y\}^{\be},dy\,dx\\
&\geq\int_{0}^{\frac{1}{2}}\int_{0}^{\frac{1}{2}}\frac{|u(x)-u(y)|^{p}}{|x-y|^{1+sp}}x^{\al}y^{\be}\,dy\,dx\\
&+\int_{\frac{1}{2}}^{1}\int_{\frac{1}{2}}^{1}\frac{|u(x)-u(y)|^{p}}{|x-y|^{1+sp}}(1-x)^{\al}(1-y)^{\be}\,dy\,dx\\
&=\int_{0}^{\frac{1}{2}}\int_{0}^{\frac{1}{2}}\frac{|u(x)-u(y)|^{p}}{|x-y|^{1+sp}}x^{\al}y^{\be}\,dy\,dx\\
&+\int_{0}^{\frac{1}{2}}\int_{0}^{\frac{1}{2}}\frac{|u(1-x)-u(1-y)|^{p}}{|x-y|^{1+sp}}x^{\al}y^{\be}\,dy\,dx\\
&\geq\mathcal{D}(1,s,p,\al,\be)\int_{0}^{\frac{1}{2}}\frac{|u(x)|^{p}+|u(1-x)|^{p}}{x^{sp-\al-\be}}\,dx\\
&=\mathcal{D}(1,s,p,\al,\be)\int_{0}^{1}\frac{|u(x)|^{p}}{\min\{x,1-x\}^{sp-\al-\be}}\,dx.
\end{align*}
That proves \eqref{J} for $J=(0,1)$. A general result follows again from scaling and translating and the fact that every nonempty, open subset of $\R$ is a countable union of disjoint intervals.
\end{proof}
\begin{proof}[Proof of Theorem \ref{tw7}]
  We repeat the proof of Loss and Sloane and reduce the problem to one dimension. By $\mathcal{L}_{\omega}$ we denote the $(d-1)$-Lebesgue measure on the plane $\{x\cdot\omega=0\}$. By Loss--Sloane formula \cite[Lemma~2.4]{LossSloane}\footnote{ The formula \cite[Lemma~2.4]{LossSloane} is stated for an integral $\int_\Omega\int_\Omega \frac{|f(x)-f(y)|^p}{|x-y|^{d+sp}}\,dy\,dx$ with $f\in C_c^\infty(\Omega)$
    and therefore cannot be directly applied. However, it is easy to see that the version we need also holds by the same argument.}  and \eqref{J}, we have
\begin{align*}
&\int_{\Omega}\int_{\Omega}\frac{|u(x)-u(y)|^{p}}{|x-y|^{d+sp}}d_{\Omega}(x)^{\al}d_{\Omega}(y)^{\be}\,dy\,dx&\\
&=\frac{1}{2}\int_{\mathbb{S}^{d-1}}\,d\omega\int_{\{x:x\cdot\omega=0\}}\,d\mathcal{L}_{\omega}(x)\int_{\{x+s\omega\in \Omega\}}\,ds\int_{\{x+t\omega\in \Omega\}}\,dt\\
&\times\frac{|u(x+s\omega)-u(x+t\omega)|^{p}}{|s-t|^{1+sp}}d_{\Omega}(x+s\omega)^{\al}d_\Omega(x+t\omega)^{\be}\\
&\geq \frac{\mathcal{D}(1,s,p,\al,\be)}{2}\int_{\mathbb{S}^{d-1}}\,d\omega\int_{\{x:x\cdot\omega=0\}}\int_{\{x+t\omega\in \Omega\}}\frac{|u(x+t\omega)|^{p}}{d_{\omega}(x+t\omega)^{sp-\al-\be}}\,dt\,d\mathcal{L}_{\omega}(x)\,d\omega\\
&=\mathcal{D}(d,s,p,\al,\be)\int_{\Omega}\frac{|u(x)|^{p}}{m_{sp-\al-\be}(x)^{sp-\al-\be}}\,dx,
\end{align*}
where the last inequality follows from the calculation
$$
\int_{\mathbb{S}^{d-1}}|\omega_{d}|^{sp}\,d\omega =\frac{2 \pi^{\frac{d-1}{2}}\Gamma\left(\frac{1+sp}{2}\right)}{\Gamma\left(\frac{d+sp}{2}\right)}
$$ and the fact that $d_{\Omega}(x+s\omega)^{\al}d_\Omega(x+t\omega)^{\be} \geq d_{\omega}(x+s\omega)^{\al}d_\omega(x+t\omega)^{\be}$. That proves \eqref{hardyconvex1}. The inequality \eqref{hardyconvex2} follows from the fact that $m_a(x)\leq d_\Omega(x)$, if $\Omega$ is convex (we refer again to \cite{LossSloane}). Optimality of the constant $\mathcal{D}$ in \eqref{hardyconvex2} can be obtained similarly as in the case of the half-space, by picking a hyperplane tangent to $\Omega$ at some point $d_0\in\partial \Omega$ and transplainting the approximating functions near $d_0$. We left the details to the Reader.
\qedhere
\end{proof}


\def\cprime{$'$} \def\cprime{$'$}
  \def\polhk#1{\setbox0=\hbox{#1}{\ooalign{\hidewidth
  \lower1.5ex\hbox{`}\hidewidth\crcr\unhbox0}}}
  \def\polhk#1{\setbox0=\hbox{#1}{\ooalign{\hidewidth
  \lower1.5ex\hbox{`}\hidewidth\crcr\unhbox0}}}
  \def\polhk#1{\setbox0=\hbox{#1}{\ooalign{\hidewidth
  \lower1.5ex\hbox{`}\hidewidth\crcr\unhbox0}}} \def\cprime{$'$}
  \def\cprime{$'$} \def\cprime{$'$} \def\cprime{$'$} \def\cprime{$'$}
  \def\cprime{$'$}

\end{document}